\newcommand{\CC}{\mathbb{C}}
\newcommand{\ch}{\mathrm{ch}}
\newcommand{\RR}{\mathbb{R}}
\newcommand{\NN}{\mathbb{N}}
\newcommand{\Schrodinger}{Schr$\ddot{\mathrm{o}}$dinger }
\newcommand{\Hormander}{H$\ddot{\mathrm{o}}$rmander}
\newtheorem{thm}{Theorem}[section]
\newtheorem{prop}[thm]{Proposition}
\newtheorem{lemma}[thm]{Lemma}
\newtheorem{hypo}[thm]{Hypothesis}
\newtheorem{deff}[thm]{Definition}
\title{Quantitative Photo-acoustic Tomography with Partial Data}
\author{Jie Chen and Yang Yang}
\date{\today}
\begin{document}

\maketitle

\begin{abstract}
  Photo-acoustic tomography is a newly developed hybrid imaging modality that combines a high-resolution modality with a high-contrast modality. We analyze the reconstruction of diffusion and absorption parameters in an elliptic equation and improve an earlier result of Bal and Uhlmann \cite{idpat} to the partial date case. We show that the reconstruction can be uniquely determined by the knowledge of 4 internal data based on well-chosen partial boundary conditions. Stability of this reconstruction is ensured if a convexity condition is satisfied. Similar stability result is obtained without this geometric constraint if 4n well-chosen partial boundary conditions are available, where  $n$ is the spatial dimension. The set of well-chosen boundary measurements is characterized by some complex geometric optics (CGO) solutions vanishing on a part of the boundary.
\end{abstract}

\section{Introduction}
Typical medical imaging modalities like computerized tomography(CT), magnetic resonance imaging(MRI) and ultrasound imaging(UI) have high resolution but low contrast. On the other hand, some modalities, based on optical, elastic, or electrical properties of tissues, exhibit a sufficiently high contrast between different types of tissues, but involve a highly smoothing measurement operator and are thus typically low-resolution. Such examples are optical tomography(OP), electrical impedance tomography(EIT) and elastographic imaging(EI).

Hybrid methods combine high resolution with high contrast, based on physical coupling mechanisms. Photo-acoustic tomography(PAT) and thermo-acoustic tomography(TAT) are recent hybrid methods based on the photo-acoustic effect which couples optical and ultrasonic waves. When a body is exposed to short pulse radiation, it absorbs energy and expands thermo-elastically. The expansion emits acoustic pulses, which travel to the boundary of the domain of interest where they are measured. What distinguishes the two modalities is that in PAT, radiation is high-frequency radiation (near-infra-red with sub-$\mu m$ wavelength), while in thermo-acoustic, radiation is low-frequency radiation (microwave with wavelengths comparable to 1$m$).

The first step in both PAT and TAT is the reconstruction of the absorbed radiation, or deposited energy, from time-dependent boundary measurements of acoustic signals. Acoustic signals propagate in fairly homogeneous domains as the sound speed is assumed to be known. The reconstruction of the amount of deposited energy is therefore quite accurate in practical settings. This step has been studied extensively in the mathematical literature, see, e.g. \cite{Ammari2009, Cox2007, Cox2009, CoxLaufer2009, Finch2009, Finch2004, Finch2007, Halt2004, Halt2005, Hristova2008, Kuchment2008, Patch2007, Stefanov2009, Xu2006, Xu2009}.

The second step in PAT or TAT is called quantitative photo- or thermo-acoustics and consists of reconstructing the attenuation and diffusion coefficients from knowledge of the amount of absorbed radiation. This second step is different in PAT and TAT as radiation is typically modeled by transport or diffusion equations in the former case and Maxwell's equations in the latter case. The problem of interest in the paper is quantitative photo-acoustics(QPAT).

In \cite{idpat}, G. Bal and G. Uhlmann studied the uniqueness and stability in the reconstruction of the attenuation and diffusion coefficients in QPAT. In this paper, we extend Bal and Uhlmann's results to partial boundary illumination conditions and prove the uniqueness and stability of the reconstruction. Particularly, we show that two coefficients in the diffusion equation are uniquely determined by four well-chosen illuminations at part of the boundary of the domain and presented an explicit reconstruction procedure. The stability of the reconstruction is established from either four internal data under geometric conditions of strict convexity on the domain of interest, or from $4n$ well-chosen partial boundary conditions, where $n$ is the spatial dimension. 

Mathematically, by the standard Liouville change of variables, the diffusion equation is  replaced by a \Schrodinger equation with unknown potential and with internal and partial boundary measurements. By adapting the complex geometrical optics(CGO) solutions proposed in \cite{CalderonPartial}, we are able to obtain uniqueness and stability results for the inverse \Schrodinger problem. The inverse Liouville change of variables concludes the uniqueness and stability of the the diffusive regime.

The rest of the paper is structured as follows. Section 2 presents the PAT problem and our main results. The inverse \Schrodinger problem and explicit reconstruction algorithms are considered in section 3 and 4. The final results on the inverse diffusion problem are proved in section 5.

\section{Inverse diffusion problem and main results}

	In photo-acoustic diffusion regime, photon propagation is modeled
	by the second-order elliptic equation,
	\begin{equation}
		\label{diff_eq}
		\begin{array}{rrll}
			-\nabla\cdot D(x)\nabla u + \sigma_a(x)u &=& 0 
			&\quad \mathrm{ in }\;\, \Omega\\[3pt]
			u &=& g &\quad \mathrm{ on }\;\, \partial\Omega,
		\end{array}
	\end{equation}
	where $\Omega$ is an open, bounded, connected domain in $\RR^n$
	with $C^2$ boundary $\partial\Omega$, $D(x)$ is a diffusion 
	coefficient, $\sigma_a(x)$ is an attenuation coefficient, and
	$g$ is the prescribed illumination source on $\partial\Omega$.
	In this paper, we will consider the partial data problem, i.e.,	
	$g\in C^{k,\alpha}(\Omega)$ is supported only on 
	a subset of the boundary. Precisely, 
  let $x_0\in\RR^n\backslash \overline{\ch(\Omega)}$, where
  $\ch(\Omega)$ denotes the convex  hull of $\Omega$. We define 
  the front and back sides of   $\partial\Omega$ with respect to 
  $x_0$ by
  \begin{equation}
     \label{frontbackbd}
     \partial\Omega_{\pm}  = 
		      \{x\in\partial\Omega:\pm(x_0-x)\cdot n(x)> 0\},
  \end{equation}
  where $n(x)$ is the unit exterior norm at $x$. 
  Let $\Gamma$ be an open subset of $\partial\Omega$, such that 
  $\partial\Omega_+\subset\Gamma$.  We assume that 
  $\mathrm{supp}(g)\subseteq\Gamma$.

	The energy deposited by the radiation results thermal expansion,
	which then emits acoustic pressure wave $p(t,x)$. The energy 
	deposited is given by
	\begin{equation}
		d(x) = G(x)\sigma_a(x)u
		\qquad\mathrm{ in }\;\, \Omega,
	\end{equation}
	where $G(x)$ is the Gr$\ddot{\mathrm{u}}$neisen
	coefficient. The acoustic	wave is modeled by
	\begin{equation}\left\{
		\begin{array}{rll}
			\partial_{tt}p -\Delta p &=& 0 
			 \qquad \mathrm{ in }\;\, (0,T)\times\RR^n,\\[3pt]
			\partial_t p|_{t=0} &=& 0,  \\[3pt]
			p|_{t=0} &=& f, 
		\end{array}\right.
	\end{equation}
	where $c(x)$ is the speed of the acoustic wave, 
	$T>0$ is fixed, and $f$ is the unknow initial pressure,
	which is proportional to the deposited energy.
	The pressure $p(t,x)$ is then measured on $[0,T]\times
	\partial\Omega$, which allows us to reconstruct $f$ 
	by solving a well-posed inverse wave problem, and thus to 
	construct the 
	energy deposited;, see e.g. \cite{Finch2004, Hristova2008, 
	Kuchment2008, Stefanov2009, Xu2009}.
	We assume this
	step and also assume $G(x)$ in known. The inverse problem
	of interest in this paper is to reconstruct the 
	coefficients $(D(x), 
	\sigma_a(x))$ from the partial boundary illuminations
	$g_j$ and the internal measurements $d_j$, 
	for	$1\leq j \leq J$, $J\in\NN^*$, with $u_j$ solving
	\begin{equation}
		\begin{array}{rrll}
			-\nabla\cdot D(x)\nabla u_j + \sigma_a(x)u_j &=& 0 
			&\quad \mathrm{ in }\;\, \Omega\\[3pt]
			u_j &=& g_j &\quad \mathrm{ on }\;\, \partial\Omega.
		\end{array}
	\end{equation}
	
	The main purpose of this paper is to prove the uniqueness
	and the stability of the coefficient reconstruction. We
	define the set of coefficients $(D(x),\sigma_a(x))\in
	\mathcal{M}$ as
	\begin{equation}
		\mathcal{M} = \{(D(x),\sigma_a(x)) : 
		(\sqrt{D},\sigma_a)\in Y\times C^{k+1}(\bar\Omega),
		\|\sqrt{D}\|_Y + \|\sigma_a\|_{C^{k+1}(\bar\Omega)}
		\leq M\},
	\end{equation}
	Where $Y=H^{n/2+k+2+\varepsilon}(\bar\Omega)\subseteq 
	C^{k+2}(\bar\Omega)$ and $M>0$ is fixed.

	We define a subset
	$\tilde\Omega\subseteq\Omega$ to be the complement of a neighborhood
	of $\partial\Omega_-$ in $\Omega$, i.e.,
	$\tilde\Omega = \Omega\backslash(\mathcal{N}
	(\partial\Omega_-))$, where $\mathcal{N}
	(\partial\Omega_-)$ is a neighborhood of
	$\partial\Omega_-$ in $\Omega$.
	
	The main results for the inverse diffusion problem with internal
	data and partial boundary data are as follows, where
	the measurements $g$ and $d$ are real-valued.
	
	\begin{thm}\label{main1}
		Let $\Omega$ be an open, bounded, connected 
		domain with $C^2$ boundary 
		$\partial\Omega$. Let $\Gamma$ and $\tilde\Omega$ be 
		defined
		as above. Assume that $(D(x),\sigma_a(x))$ and $(\tilde{D}(x),
		\tilde\sigma_a(x))$ are in $\mathcal{M}$ with
		$D|_{\Gamma} = 	\tilde{D}|_{\Gamma}$. 
		Let	$d=(d_j)$ and $\tilde{d}=(\tilde d_j)$,
		$j=1,\ldots,4$, be the internal data
		for coefficients $(D(x),\sigma_a(x))$ and $(\tilde{D}(x),
		\tilde\sigma_a(x))$, respectively and with boundary conditions 
		$g=(g_j)$, $j=1,\ldots,4$. Then there is a set of
		illuminations $g\in(C^{k,\alpha}(\partial\Omega))^4$,
		$\mathrm{supp}(g)\subseteq\Gamma$, for 
		some $\alpha > 1/2$, such that if $d = \tilde{d}$, then
		$(D(x),\sigma_a(x)) = (\tilde{D}(x), \tilde\sigma_a(x))$
		in $\tilde\Omega$.
	\end{thm}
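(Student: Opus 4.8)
The plan is to reduce the diffusion problem to an inverse \Schrodinger problem by the Liouville substitution, to exploit the algebraic structure of the internal data to produce ratios that satisfy a transport (conductivity) equation, and then to feed in the well-chosen partial-data CGO solutions of Section 3 so as to propagate equality of the coefficients from $\Gamma$ into $\tilde\Omega$. First I would set $a=\sqrt{D}$, $\tilde a=\sqrt{\tilde D}$ and substitute $v_j=a u_j$, $\tilde v_j=\tilde a\tilde u_j$. A direct computation turns \eqref{diff_eq} into the \Schrodinger equation $-\Delta v_j+q v_j=0$ with
\[
q=\frac{\Delta a}{a}+\frac{\sigma_a}{a^{2}},\qquad w:=\frac{\sigma_a}{a},
\]
and likewise $-\Delta\tilde v_j+\tilde q\,\tilde v_j=0$. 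Since $G$ is known and nonvanishing, the hypothesis $d_j=\tilde d_j$ reads $\sigma_a u_j=\tilde\sigma_a\tilde u_j$, i.e. $H_j:=d_j/G=w\,v_j=\tilde w\,\tilde v_j$. In particular all solution ratios agree: $\beta_j:=v_j/v_1=u_j/u_1=\tilde u_j/\tilde u_1=\tilde v_j/\tilde v_1$ for $j=2,3,4$, and each $\beta_j$ is a \emph{known} function determined by the data.

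Next I would use that $v_1$ and $\tilde v_1$ solve \Schrodinger equations whose squares are conductivities for the common functions $\beta_j$: a one-line computation using $-\Delta v_1+q v_1=0$ gives $\nabla\cdot(v_1^{2}\nabla\beta_j)=0$ and similarly $\nabla\cdot(\tilde v_1^{2}\nabla\beta_j)=0$. Writing $\mu=\log(v_1^{2}/\tilde v_1^{2})$ and subtracting the logarithmic forms of these two identities yields the family of transport equations
\[
\nabla\mu\cdot\nabla\beta_j=0,\qquad j=2,3,4 .
\]
This is the step where the partial-data CGO solutions enter. I would choose the four illuminations $g_j$, supported in $\Gamma$, so that the associated solutions are the real and imaginary parts of the CGO solutions of Section 3, built with the limiting Carleman weight $\log|x-x_0|$ and vanishing on $\partial\Omega\setminus\Gamma\subseteq\partial\Omega_-$. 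The purpose of ``well-chosen'' is that their phases render the fields $\nabla\beta_j$ non-degenerate on $\tilde\Omega$, so that the vector fields $\{\nabla\beta_j\}$ foliate $\tilde\Omega$ by characteristic curves reaching $\Gamma$. Because $a=\tilde a$ on $\Gamma$ and $u_j=\tilde u_j=g_j$ there, we have $\mu=0$ on $\Gamma\cap\partial\tilde\Omega$; propagating this along the characteristics via the transport equations forces $\mu\equiv 0$, that is $v_1=\tilde v_1$ in $\tilde\Omega$.

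With $v_1=\tilde v_1$ the remaining steps are algebraic. Subtracting the two \Schrodinger equations gives $(q-\tilde q)v_1=0$; since the CGO solution $v_1$ is nonvanishing, $q=\tilde q$ in $\tilde\Omega$. Combining $v_1=\tilde v_1$ with $w v_1=\tilde w\tilde v_1$ yields $w=\tilde w$, i.e. $\sigma_a/\sqrt{D}=\tilde\sigma_a/\sqrt{\tilde D}$. Finally I would invert the Liouville change of variables: $a$ and $\tilde a$ both solve the linear elliptic equation $\Delta a-q a=-w$ in $\tilde\Omega$ with the common boundary value $a=\tilde a$ on $\Gamma$ coming from $D|_\Gamma=\tilde D|_\Gamma$, so $a-\tilde a$ solves the homogeneous equation and vanishes on $\Gamma$. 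Uniqueness for this elliptic problem (using that $v_1=H_1/(Gw)$ is reconstructed in $\tilde\Omega$, which supplies Cauchy data on $\Gamma$, together with unique continuation from $\Gamma$) forces $a=\tilde a$, hence $D=\tilde D$ and then $\sigma_a=w a=\tilde\sigma_a$, in $\tilde\Omega$.

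I expect the main obstacle to be the CGO construction itself: producing complex geometrical optics solutions for the \Schrodinger operator whose Dirichlet traces are supported in $\Gamma$ (equivalently, that vanish on the back face $\partial\Omega_-$), with enough control on the remainder that the phase dominates and the fields $\nabla\beta_j$ are non-degenerate throughout $\tilde\Omega$. This is precisely the adaptation of the Kenig--Sjöstrand--Uhlmann partial-data machinery of \cite{CalderonPartial} to the internal-data setting of \cite{idpat}, and it is what fixes the number of illuminations: four to propagate equality along characteristics for uniqueness, and $4n$ to obtain the pointwise-invertible gradient system needed for the stability estimate. A secondary technical point, to be handled with care, is the final elliptic uniqueness from data on $\Gamma$ alone.
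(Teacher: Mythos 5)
Your proposal follows essentially the same route as the paper: Liouville change of variables, the partial-data CGO solutions of \cite{CalderonPartial} with limiting Carleman weight $\varphi=\log|x-x_0|$ and traces supported in $\Gamma$, a transport equation along a data-defined vector field whose integral curves are approximately radial rays toward $x_0$ and hence exit through $\partial\Omega_+\subset\Gamma$, recovery of $q$ from the Schr\"odinger equation, and inversion of the Liouville transform. In fact your fields coincide with the paper's up to a nonvanishing scalar: since $d_1\nabla d_2-d_2\nabla d_1=d_1^2\nabla(d_2/d_1)$, the field $\beta_d$ of (\ref{betagamma}) is parallel to your $\nabla\beta_2$, and in the ratio of the two CGO solutions (\ref{CGOu12}) the oscillatory factor $e^{i\psi/h}$ cancels, which is exactly what the paper achieves by taking $\chi=e^{-2i\psi/h}$ in Proposition \ref{uniquemu}. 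The differences are organizational: you run a homogeneous transport equation for the difference of the two unknowns with zero data on $\Gamma$, whereas the paper solves the inhomogeneous equation (\ref{normvecode}) for $\mu=\sigma_a/\sqrt D$ constructively by characteristics with data $d/g$ on $\partial\Omega_+$ (formula (\ref{muchar})), a version that is then reused for the stability theorems. Two further points need repair but are minor: the four real illuminations must first be assembled into two complex solutions, since the real part of a CGO vanishes on hypersurfaces, so ratios against a single real $v_1$, the logarithm $\log(v_1^2/\tilde v_1^2)$, and the assertion that ``the CGO solution $v_1$ is nonvanishing'' only make sense for the complex combinations; alternatively use $\nabla\cdot((v_1^2-\tilde v_1^2)\nabla\beta_j)=0$ to avoid logarithms, and recover $q$ by the unique-continuation/density argument in the proof of Theorem \ref{uniquenessthm}.

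The genuine gap is your last step. Once $q=\tilde q$ and $\sigma_a/\sqrt D=\tilde\sigma_a/\sqrt{\tilde D}$ are known on $\tilde\Omega$, the difference $e=\sqrt D-\sqrt{\tilde D}$ solves the homogeneous equation $(\Delta+q)e=0$ with only the Dirichlet condition $e=0$ on $\Gamma$, and your proposed fix --- ``Cauchy data on $\Gamma$, together with unique continuation'' --- is not available: the internal data constrain only the product $\sqrt D\,u_1$, so no Neumann data for $e$ can be extracted on $\Gamma$. Concretely, writing $a=\sqrt D$ and differentiating $a u_1=\tilde a\tilde u_1$ in the normal direction on $\Gamma$, then eliminating $\partial_n(\tilde u_1-u_1)$ via $u_1=v_1/a$ and $\tilde u_1=v_1/\tilde a$, returns the tautology $g_1\partial_n e=g_1\partial_n e$; unique continuation from $\Gamma$ therefore has nothing to propagate. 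The paper closes this step by a different mechanism (Section 5): both $\sqrt D$ and $\sqrt{\tilde D}$ solve the linear elliptic equation (\ref{Liou4}) with the same $(q,\mu)$ and the same Dirichlet data, and uniqueness holds because $0$ is not an eigenvalue of $\Delta+q$, which is guaranteed by the implication $(D,\sigma_a)\in\mathcal M\Rightarrow(q,\mu)\in\mathcal P$ via the Fredholm alternative. You should replace your unique-continuation argument by this eigenvalue/Dirichlet-uniqueness argument (noting that it, too, implicitly requires posing the Dirichlet problem on the region where $q$ and $\mu$ have actually been determined, a point the paper passes over quickly).
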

	
	To consider the stability of the reconstruction, additional
	geometric information about $\partial\Omega$ is needed. We impose
	the following hypothesis.
	\begin{hypo}
		\label{ConvHypo}
		Let $\Omega$ and $\partial\Omega_+$ be as above.
		There exists $R<\infty$ such that for each 
		$y\in\partial\Omega_+\subseteq
		\partial\Omega$, we have $\Omega\subset B_{y}(R)$, where
		$B_{y}(R)$ is a ball of radius $R$ that is tangent to $\partial\Omega_+$
		at $y$.
	\end{hypo}
	
	The stability result follows.
	
	\begin{thm}\label{main2}
		Let $k\geq3$. Let $\Omega$ satisfy Hypothesis \ref{ConvHypo}
		with $\partial\Omega$ of class $C^{k+1}$. Let 
		$\Gamma$ and $\tilde\Omega$ be defined as above. 
		Assume that 
		$(D(x),\sigma_a(x))$ and $(\tilde{D}(x), \tilde\sigma_a(x))$
		are in $\mathcal{M}$ with $D|_\Gamma = 
		\tilde{D}|_\Gamma$. Let $d=(d_j)$ and
		$\tilde{d}=(\tilde d_j)$,	$j=1,\ldots,4$, be the internal data
		for coefficients $(D(x),\sigma_a(x))$ and $(\tilde{D}(x),
		\tilde\sigma_a(x))$, respectively and with boundary conditions 
		$g=(g_j)$, $j=1,\ldots,4$. Then there is a set of
		illuminations $g\in(C^{k,\alpha}(\partial\Omega))^4$,
		$\mathrm{supp}(g)\subseteq\Gamma$, for some $\alpha
		> 1/2$ and a constant $C$ such that
		\begin{equation}
			\|D-\tilde D\|_{C^{k-1}(\tilde{\Omega})} +
			\|\sigma_a-\tilde \sigma_a\|_{C^{k-1}(\tilde{\Omega})} \leq
			C\|d-\tilde d\|_{(C^{k}(\tilde{\Omega}))^4}.
		\end{equation}
	\end{thm}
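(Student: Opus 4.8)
The plan is to execute the two-stage strategy announced in the introduction. First I would reduce \eqref{diff_eq} to a \Schrodinger equation by the Liouville change of variables, then establish a \emph{quantitative} reconstruction of the potential and of the factor $\gamma=\sqrt D$ from the partial-data CGO solutions of \cite{CalderonPartial}, and finally propagate the resulting estimates back through the inverse change of variables. I would use the very same family of illuminations $g\in(C^{k,\alpha}(\partial\Omega))^4$ already selected for Theorem~\ref{main1}; the new ingredient is that Hypothesis~\ref{ConvHypo} upgrades the qualitative uniqueness argument to stability estimates with explicit constants. Concretely, writing $\gamma=\sqrt D$ and $v_j=\gamma u_j$, each $u_j$ solving \eqref{diff_eq} corresponds to a solution of $-\Delta v_j+qv_j=0$ in $\Omega$ with $q=\tfrac{\Delta\gamma}{\gamma}+\tfrac{\sigma_a}{\gamma^2}$, while the normalized internal datum is $H_j:=d_j/G=\sigma_a u_j=(\gamma q-\Delta\gamma)v_j$. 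Since $D|_\Gamma=\tilde D|_\Gamma$ we have $\gamma|_\Gamma=\tilde\gamma|_\Gamma$, which supplies the boundary data needed to invert the change of variables on $\tilde\Omega$. Because $\partial\Omega_+\subset\Gamma\supseteq\mathrm{supp}(g)$, the $g_j$ may be chosen so that the $v_j$ are traces of CGO solutions $v\sim e^{\frac1h(\varphi+i\psi)}(a+r_h)$ built from the limiting Carleman weight $\varphi(x)=\log|x-x_0|$, $x_0\notin\overline{\ch(\Omega)}$, which vanish on the back face $\partial\Omega_-$.

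For the inverse \Schrodinger step, I would work with one pair of illuminations and set $\beta=H_2/H_1=v_2/v_1$, which is directly controlled by the data since the CGO amplitude keeps $v_1$ (hence $H_1$) bounded away from $0$ on $\tilde\Omega$ uniformly over $\mathcal M$. Substituting $v_2=\beta v_1$ into the two \Schrodinger equations eliminates $q$ and produces the conservation law $\nabla\cdot(v_1^2\nabla\beta)=0$, equivalently the transport equation $\nabla\beta\cdot\nabla\log(v_1^2)=-\Delta\beta$. The CGO phase makes $\nabla\beta$ non-vanishing with a known principal direction, so this first-order equation can be integrated along its characteristics starting from the illuminated face, where $v_1^2$ is known from $\gamma|_\Gamma$ and $g$; this reconstructs $v_1^2$, then $q=\Delta v_1/v_1$, on $\tilde\Omega$. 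Finally $\gamma$ is recovered as the solution of the linear elliptic problem $-\Delta\gamma+q\gamma=H_1/v_1$ in $\Omega$ with $\gamma|_\Gamma$ known, whence $D=\gamma^2$ and $\sigma_a=\gamma^2q-\gamma\Delta\gamma$. Each arrow $H\mapsto\beta\mapsto v_1^2\mapsto q\mapsto\gamma$ is Lipschitz in the appropriate norm, the ratio and transport integration being stable because $|H_1|$ and $|\nabla\beta|$ are bounded below and the elliptic solve being stable by standard regularity; I would package this as the inverse \Schrodinger stability result of Sections~3 and~4 and quote it here.

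Hypothesis~\ref{ConvHypo} enters precisely at the transport/Carleman stage: the ball condition guarantees that the spheres $\{|x-x_0|=\text{const}\}$, the level sets of $\varphi$, foliate $\tilde\Omega$ so that every characteristic of $\nabla\beta$ reaches $\partial\Omega_+$, and it controls the sign of the boundary contribution on the inaccessible face $\partial\Omega_-$ in the Carleman estimate underlying the CGO construction; with $4$ illuminations this single foliation covers $\tilde\Omega$, whereas without convexity one needs $4n$ illuminations, i.e.\ $n$ choices of $x_0$, to do so. Combining the Lipschitz bounds gives $\|q-\tilde q\|+\|\gamma-\tilde\gamma\|\le C\|H-\tilde H\|$ on $\tilde\Omega$, and since $H=d/G$ with $G$ known, $\|H-\tilde H\|\le C\|d-\tilde d\|$. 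Passing from $(q,\gamma)$ to $(D,\sigma_a)$ through $D=\gamma^2$ and $\sigma_a=\gamma^2q-\gamma\Delta\gamma$ is a map that is algebraic up to one derivative and locally Lipschitz on $\mathcal M$; tracking the two derivatives lost in $q=\Delta v_1/v_1$ against the two gained by the elliptic solve for $\gamma$ yields the stated single loss, namely $C^{k-1}$ control of $(D,\sigma_a)$ from $C^{k}$ control of $d$.

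The main obstacle is the quantitative stability of the transport reconstruction of $v_1^2$ across all of $\tilde\Omega$: one must show that, uniformly over the class $\mathcal M$, the CGO solutions keep $|\nabla\beta|$ and $|H_1|$ bounded below and that their characteristics, issuing from $\partial\Omega_+$, sweep out $\tilde\Omega$ with uniformly bounded travel length. This is exactly what Hypothesis~\ref{ConvHypo} is engineered to secure, and the delicate point will be to control the remainder $r_h$ together with the $\partial\Omega_-$ boundary term in the Carleman estimate in a manner uniform over $\mathcal M$, so that the constant $C$ in the final bound is genuinely independent of the particular pair of coefficients.
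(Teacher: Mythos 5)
Your overall architecture coincides with the paper's: Liouville reduction to a \Schrodinger equation, CGO solutions built from the limiting Carleman weight $\varphi=\log|x-x_0|$ and vanishing on $\partial\Omega\setminus\Gamma$, a Wronskian-based transport equation integrated along characteristics that run toward $x_0$ and exit through $\partial\Omega_+$, recovery of $q$ from the \Schrodinger equation, and finally the elliptic solve (\ref{Liou4}) for $\sqrt D$ followed by $\sigma_a=\mu\sqrt D$. Your conservation law $\nabla\cdot(v_1^2\nabla\beta)=0$ with $\beta=H_2/H_1$ is algebraically the same identity as the paper's transport equation (\ref{vectorode})--(\ref{betagamma}): expand the divergence and substitute $v_1=d_1/\mu$, and you recover $\beta_d\cdot\nabla\mu+\gamma_d\mu=0$ with a vector field parallel to yours. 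So this part is an equivalent reformulation, not a different route, and your derivative bookkeeping (lose two in $q$, regain two in the elliptic solve) matches the paper's.

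The genuine gap is in the stability step, and it sits exactly where Hypothesis \ref{ConvHypo} must be used. You assert that each arrow $H\mapsto\beta\mapsto v_1^2\mapsto q\mapsto\gamma$ is Lipschitz, ``the transport integration being stable because $|H_1|$ and $|\nabla\beta|$ are bounded below.'' That is not where the difficulty lies. When the data are perturbed, the vector field changes, so the characteristic through a fixed $x\in\tilde\Omega$ exits $\partial\Omega_+$ at a different point $\tilde x_+(x)$ and at a different time $\tilde t_+(x)$; since the initial value of the transported quantity is read off at that exit point, the reconstruction is only as stable as the maps $\beta\mapsto x_+$ and $\beta\mapsto t_+$. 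Establishing
\begin{equation*}
\|x_+-\tilde x_+\|_{C^{k}(\tilde\Omega)}+\|t_+-\tilde t_+\|_{C^{k}(\tilde\Omega)}\leq C\|\beta-\tilde\beta\|_{C^{k}(\tilde\Omega)}
\end{equation*}
is the content of the paper's Lemma \ref{xtlemma} (feeding into Proposition \ref{stableprop} and Theorem \ref{stabthm}), and the enclosing-ball condition of Hypothesis \ref{ConvHypo} is used there, via a quadratic equation for $\tilde t_+-t_+$ relative to the tangent ball of radius $R$, to rule out the scenario in which a perturbed characteristic grazes the boundary nearly tangentially and exits much later and far away. Your proposal instead attributes Hypothesis \ref{ConvHypo} to (i) guaranteeing that characteristics reach $\partial\Omega_+$ --- but this holds for any domain, since $\beta$ is $O(h)$-close to the radial field $\mu^2\Gamma(x_0-x)/|x_0-x|^2$ and $\partial\Omega_+$ is by definition the part of the boundary visible from $x_0$; it is precisely why the uniqueness Theorem \ref{main1} needs no convexity --- and (ii) fixing the sign of a boundary term on $\partial\Omega_-$ in the Carleman estimate, but the CGO construction of Proposition \ref{CGOprop} requires only $x_0\notin\overline{\ch(\Omega)}$ and is wholly independent of Hypothesis \ref{ConvHypo}. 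Consequently the ``main obstacle'' you flag (uniform control of $r_h$ and of Carleman boundary terms) is not the missing ingredient; the exit-point/exit-time stability lemma is, and your plan contains no argument for it.
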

	
	Instead of imposing geometric hypothesis, the stability of the 
	reconstruction could also follow from $4n$ well-chosen measurements.
	
	\begin{thm}\label{main3}
		Let $k\geq2$. Let $\Omega$ be an arbitrary bounded domain
		with $\partial\Omega$ of class $C^{k+1}$. Let $\Gamma$
		and	$\tilde\Omega$ be defined as above. Assume that 
		$(D(x),\sigma_a(x))$ and $(\tilde{D}(x), \tilde\sigma_a(x))$
		are in $\mathcal{M}$ with $D|_\Gamma = 
		\tilde{D}|_\Gamma$. Let $d=(d_j)$ and 
		$\tilde{d}=(\tilde d_j)$,
		$j=1,\ldots,4n$, be the internal data
		for coefficients $(D(x),\sigma_a(x))$ and $(\tilde{D}(x),
		\tilde\sigma_a(x))$, respectively and with boundary condition 
		$g=(g_j)$, $j=1,\ldots,4n$. Then there is a set of
		illuminations $g\in(C^{k,\alpha}(\partial\Omega))^{en}$,
		$\mathrm{supp}(g)\subseteq\Gamma$, for 
		some $\alpha > 1/2$ and a constant $C$ such that
		\begin{equation}
			\|D-\tilde D\|_{C^{k}(\tilde{\Omega})} +
			\|\sigma_a-\tilde \sigma_a\|_{C^{k}(\tilde{\Omega})} \leq
			C\|d-\tilde d\|_{(C^{k+1}(\tilde{\Omega}))^{4n}}.
		\end{equation}
	\end{thm}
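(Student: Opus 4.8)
The plan is to realise the Liouville-to-\Schrodinger programme announced in the introduction, observing that the only place where Hypothesis \ref{ConvHypo} enters Theorem \ref{main2} is an integration-along-flow-lines step, and that replacing that step by a pointwise recovery of a full gradient is exactly what trades the convexity constraint for the extra factor of $n$. I would begin with the change of variables $v_j=\sqrt D\,u_j$, which turns $-\nabla\cdot(D\nabla u_j)+\sigma_a u_j=0$ into $-\Delta v_j+q v_j=0$ with $q=\frac{\Delta\sqrt D}{\sqrt D}+\frac{\sigma_a}{D}$, and writes the internal data as $d_j=G\sigma_a v_j/\sqrt D$. Two features survive: the ratios $d_j/d_i=v_j/v_i$ are known exactly, free of $\sigma_a$ and of $\gamma:=\sqrt D$; and the hypothesis $D|_\Gamma=\tilde D|_\Gamma$ together with the boundary traces make $\gamma$ and $\sigma_a$ available on $\Gamma\supseteq\partial\Omega_+$. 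I would then invoke the partial-data CGO solutions of \cite{CalderonPartial}: for a semiclassical parameter $h$ and a complex frequency $\rho$ with $\rho\cdot\rho=0$, solutions $v=e^{\rho\cdot x}(1+\psi_\rho)$ of $-\Delta v+qv=0$ whose traces are supported in $\Gamma$, because the boundary Carleman estimate forces them to vanish near $\partial\Omega_-$, with $\|\psi_\rho\|\to0$ as $h\to0$. For Theorem \ref{main3} I would produce $n$ such families with frequencies pointing in $n$ uniformly linearly independent real directions; real and imaginary parts of these complex illuminations furnish the $4n$ real boundary conditions.

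The heart of the argument is the exact divergence identity $\nabla\cdot\left(v_i^2\,\nabla R_{ij}\right)=0$, valid because $v_i,v_j$ share the potential $q$, where $R_{ij}=d_j/d_i=v_j/v_i$ is known. Expanding it gives the first-order relation $\nabla\log(\gamma/\sigma_a)\cdot\nabla R_{ij}=-\frac{1}{2}\Delta R_{ij}-\nabla\log(d_i/G)\cdot\nabla R_{ij}$, whose right-hand side is built entirely from data; thus each admissible pair furnishes the derivative of $\log(\gamma/\sigma_a)$ in the direction $\nabla R_{ij}$. In Theorem \ref{main2} a single well-chosen CGO frequency makes $\nabla R_{ij}$ a controlled advection field, and Hypothesis \ref{ConvHypo} guarantees that its flow lines issue from $\Gamma$ and sweep out $\tilde\Omega$ stably, so one integrates along them. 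Here I would instead use the $n$ independent frequency directions so that the fields $\nabla R_{ij}^{(m)}$, $m=1,\dots,n$, are pointwise linearly independent throughout $\tilde\Omega$; solving the resulting $n\times n$ system recovers the entire gradient $\nabla\log(\gamma/\sigma_a)$ at each point, and integrating this exact one-form along any path from $\Gamma$, where the combination is known, reconstructs $\gamma/\sigma_a$ on $\tilde\Omega$. Path independence is what renders the convexity hypothesis superfluous.

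Once $P:=\sigma_a/\gamma$ is known on $\tilde\Omega$ I would recover the \Schrodinger solutions as $v_i=d_i/(G P)$ and hence the potential $q=\Delta v_i/v_i$; the identity $q\gamma=\Delta\gamma+P$ then reads as a linear elliptic equation $\Delta\gamma-q\gamma+P=0$ for $\gamma$ with its value prescribed on $\Gamma$, whose unique solvability, via unique continuation for the \Schrodinger operator from $\Gamma$, yields $\gamma$, hence $D=\gamma^2$ and $\sigma_a=P\gamma$. For the stability statement I would propagate a perturbation $d-\tilde d$ through each step: ratios, products and their logarithmic gradients depend smoothly on the data; inversion of the $n\times n$ system is a bounded pointwise operation once its determinant is bounded away from $0$ uniformly; path integration is non-expansive up to the diameter of $\tilde\Omega$; and comparing the two elliptic problems for $\gamma$ and $\tilde\gamma$ gives the final bound. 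The single differentiation needed to form $\nabla\log(\gamma/\sigma_a)$ and $q$ accounts for the loss of exactly one derivative, i.e. the $C^{k+1}(\tilde\Omega)$ data norm controlling the $C^{k}(\tilde\Omega)$ coefficient norm, and the a priori bound $M$ defining $\mathcal M$ keeps every constant uniform.

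The main obstacle is the partial-data CGO analysis underlying the first and second steps. One must bound the remainders $\psi_\rho$ in a norm strong enough, through the boundary Carleman estimate of \cite{CalderonPartial}, that the transport coefficients, and therefore the determinant of the $n\times n$ system, depend on the measured data in a quantitatively stable way; and one must arrange $n$ admissible frequencies that keep $\nabla R_{ij}^{(m)}$ uniformly independent on all of $\tilde\Omega$ while every solution still vanishes on $\partial\Omega_-$. Securing these $C^{k}$-type remainder estimates uniformly over $\mathcal M$ is what fixes the regularity threshold $k\ge2$ and closes the stability estimate of Theorem \ref{main3}.
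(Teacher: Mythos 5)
Your architecture matches the paper's: Liouville transform to a \Schrodinger equation, partial-data CGO solutions, a transport identity extracted from pairs of internal data (your divergence identity $\nabla\cdot(v_i^2\nabla R_{ij})=0$ is exactly equation (\ref{vectorode}), $\beta_d\cdot\nabla\mu+\gamma_d\mu=0$, rewritten for $\log(\gamma/\sigma_a)=-\log\mu$), then $n$ CGO families to upgrade a directional derivative to the full gradient as in (\ref{4nstab1}), with path-independent integration replacing Hypothesis \ref{ConvHypo}. The genuine gap is your last step, the recovery of $\gamma=\sqrt{D}$. You solve $\Delta\gamma-q\gamma+P=0$ with the value of $\gamma$ prescribed only on $\Gamma$ and claim uniqueness ``via unique continuation from $\Gamma$''. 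This fails twice. First, Dirichlet data on a proper subset of the boundary does not determine a solution of a second-order elliptic equation: when $0$ is not an eigenvalue one can add to $\gamma$ the nonzero solution $w$ of $(\Delta-q)w=0$ whose boundary trace vanishes on $\Gamma$ and is nonzero on $\partial\Omega\setminus\Gamma$; unique continuation needs Cauchy data, i.e.\ both $\gamma|_\Gamma$ and $\partial_\nu\gamma|_\Gamma$, while the hypothesis $D|_\Gamma=\tilde D|_\Gamma$ supplies only the trace. Second, even granting Cauchy data, continuation from lateral data is ill-posed with at best conditional logarithmic stability, so it cannot produce the Lipschitz bound $\|D-\tilde D\|_{C^{k}(\tilde\Omega)}\leq C\|d-\tilde d\|_{(C^{k+1}(\tilde\Omega))^{4n}}$ asserted in the theorem. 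The paper's step here is different and well-posed: since $(D,\sigma_a)\in\mathcal{M}$ implies, by the Fredholm alternative, that $0$ is not an eigenvalue of $\Delta+q$, it solves the Dirichlet problem for (\ref{Liou4}) and gets stability from elliptic (Schauder) regularity applied to $-(\Delta+q)(\sqrt{D}-\sqrt{\tilde D})=\mu-\tilde\mu+(q-\tilde q)\sqrt{\tilde D}$. That elliptic solve also supplies the two-derivative gain that your bookkeeping misses: $q$ is only controlled in $C^{k-2}$ (it is obtained from $\Delta u_j/u_j$, a loss of two derivatives, not one), and it is ellipticity, not path integration, that returns $D$ to $C^{k}$.

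A second gap is the one you flag but never close: constructing $n$ CGO families whose transport directions are uniformly linearly independent on $\tilde\Omega$ while all of them vanish on the same fixed $\Gamma_-=\partial\Omega\setminus\Gamma$. With your linear phases $e^{\rho\cdot x}$ the admissible vanishing set is tied to the direction of $\mathrm{Re}\,\rho$, so $n$ uniformly independent directions force the union of the corresponding front sides to cover most of $\partial\Omega$ (or else the directions cluster and the determinant of your $n\times n$ system degenerates, ruining the stability constant). The paper's device is specific and is what makes the count $4n$ compatible with a fixed illumination set: take poles $x^1,\dots,x^n$ spanning a hyperplane $H\subset\RR^n\setminus\overline{\ch(\Omega)}$ and logarithmic weights $\varphi^j=\log|x-x^j|$; then for every $x\in\Omega$ the vectors $x^j-x$ form a basis of $\RR^n$, so $B_\varphi=(\nabla\varphi^j)$ is invertible with uniformly controlled $C^{k}$ inverse, while all poles stay outside the convex hull so each $\varphi^j$ remains an admissible limiting Carleman weight for the partial-data construction (with $\Gamma$ taken to contain the corresponding front sides $\partial\Omega_+^j$). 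Without this construction or a substitute, the uniform invertibility on which your pointwise gradient recovery rests is not established.
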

	
	The proof of these results are mainly based on the corresponding
	results for the inverse \Schrodinger equation. In the following
	section, we will focus on \Schrodinger equation and prove similar
	results.

\section{Inverse \Schrodinger equation with internal data}
\label{ISID}
	Let $\Omega$ be an open, bounded, connected domain in $\RR^n$ with
  smooth boundary $\partial\Omega$. Let $\partial\Omega_+$ be 
  defined in (\ref{frontbackbd}) and let
  $\Gamma\subset\partial\Omega$ be an open subset such that
  $\partial\Omega_+\subset\Gamma$.
  Consider the diffusion equation with unknow diffusion coefficient
  $D$ and unknow attenuation coefficient $\sigma_a$:
  \begin{equation*}
      -\nabla\cdot D\nabla u + \sigma_a u = 0, 
      \quad\mathrm{in}\;\,\Omega,
      \qquad
      u = g, \quad \mathrm{on}\;\,\partial\Omega,
  \end{equation*}
  where $\mathrm{supp}(g)\subseteq \Gamma$.  
  Using the standard Liouville change of variables, $v = \sqrt{D}u$
  solves
  \begin{equation}
  		\label{Liou1}
      \Delta v+qv=0,
  \end{equation}
  with
  \begin{equation}
  		\label{Liou2}
      q = -\frac{\Delta\sqrt{D}}{\sqrt{D}} - \frac{\sigma_a}{D}.
  \end{equation}
  The internal data in photo-acoustics is  given by
  \begin{equation}
  		\label{Liou3}
      d = \sigma_a u = \frac{\sigma_a}{\sqrt{D}}v = \mu v,
      \quad
      \mu:=\frac{\sigma_a}{\sqrt{D}},
  \end{equation}
  while the new boundary condition is given by $\sqrt{D}g$ on
  $\partial\Omega$, assuming $D$ is known on 
  $\partial\Omega$.
  
  After relabeling, we have a \Schrodinger equation of the form
  \begin{equation}
  	\label{SchrEq}
		\begin{array}{rrll}
			\Delta u_j + qu_j &=& 0 
			&\quad \mathrm{ in }\;\, \Omega\\[3pt]
			u_j &=& g_j &\quad \mathrm{ on }\;\, \partial\Omega,
		\end{array}
	\end{equation}
	where $1\leq j\leq J$, $J\in\NN^*$ is the number of illuminations, 
	$q$ is an unknown potential, and 
	$\mathrm{supp}(g_j)\subseteq \Gamma$.
	We assume that the homogeneous
	problem with $g_j=0$ admits the unique solution $u_j\equiv 0$
	so that $\lambda=0$ is not in the spectrum of $\Delta+q$. We
	assume that $q$ on $\Omega$ is the restriction to $\Omega$ of 
	a function $\tilde{q}$ compactly supported on $\RR^n$ and such
	that $\tilde{q}\in H^{n/2+k+\varepsilon}(\RR^n)$ with 
	$\varepsilon>0$ for $k\geq 1$.
	Moreover, we assume that the extension is chosen so that
	\begin{equation}
		\|q\|_{H^{n/2+k+\varepsilon}(\Omega)}\leq C
		\|\tilde{q}\|_{H^{n/2+k+\varepsilon}(\RR^n},
	\end{equation}
	for some constant $C=C(\Omega, k, n)$ independent of $q$. That
	such a constant exists may be found e.g. in 
	\cite{Stein1970}, Chapter VI, Theorem 5.
	
	We assume that $g_j\in C^{k,\alpha}(\partial\Omega)$ with $\alpha>
	\frac{1}{2}$ and $\partial\Omega$ is of class $C^{k+1}$ so that
	(\ref{SchrEq}) admits a unique solution $u_j\in C^{k+1}(\Omega)$,
	see \cite{Gilbarg1977}, Theorem 6.19. The internal data are of the 
	form
	\begin{equation}
		d_j(x) = \mu(x)u_j(x),\qquad \mathrm{ in }\;\, \Omega,
		1\leq j \leq J.
	\end{equation}
	Here $\mu\in C^{k+1}(\bar{\Omega})$ verifies $0<\mu_0\leq
	\mu(x)\leq\mu_0^{-1}$.
	
	The \emph{partial data
	inverse Schr$\ddot{o}$dinger problem with internal data}
	(PISID) consists of reconstructing $(q,\mu)$ in $\Omega$ from
	knowledge of $d=(d_j)_{1\leq j\leq J}\in(C^{k+1}(\Omega))^J$ and
	illuminations $g=(g_j)_{1\leq j\leq J}\in (C^{k,\alpha}(\partial
	\Omega))^J$, while $\mathrm{supp}(g_j)\subseteq \Gamma$.
	We will mostly be concerned with the case $J=4$ and $J=4n$ with
	$g_j$ and	$d_j$ real-valued measurements.

\subsection{Construction of complex geometrical optics solution}    

	The proof mainly depends on the \emph{Complex Geometrical Optics}(CGO) solutions. Let $P_0 = -h^2\Delta = \sum (hD_{x_k})^2$ be a differential operator, with $h$ a small parameter and $D_{x_k}=-i\partial_{x_k}$. Let $\varphi\in C^\infty(\Omega;\RR)$, with $\nabla\varphi\neq 0$ everywhere. Consider the conjugated operator
	\begin{equation}
		e^{\varphi/h}\circ P_0\circ e^{-\varphi/h} 
		= \sum_{k=1}^n (hD_{x_k} + i\partial_{x_k} \varphi)^2
		= A + i B,
	\end{equation}
	where $A, B$ are the formally selfadjoint operators,
	\begin{eqnarray*}
		A &=& (hD)^2 - (\nabla\varphi)^2,\\
		B &=& \nabla\varphi\cdot hD + hD\cdot\nabla\varphi,
	\end{eqnarray*}
	having the principal symbols
	\begin{equation}
		\label{PricSym}
		a =  \xi^2 - (\nabla\varphi)^2,
		\qquad
		b =  2\nabla\varphi\cdot\xi.
	\end{equation}
	
	We want the conjugated operator $e^{\varphi/h}\circ P_0\circ e^{-\varphi/h}$ to be locally solvable in a semi-classical sense, which means its principal symbol satisfies \Hormander's condition
	\begin{equation}
		\label{Hormander}
		\{a,b\} = 0,\qquad\mathrm{when }\;\, a = b =0,
	\end{equation}
	where $\{a, b\}=\sum_k(\partial_{\xi_k} a\partial_{x_k}b - \partial_{x_k} a \partial_{\xi_k} b)$ is the Poisson bracket.
	
	\begin{deff}
		A real smooth function $\varphi$ on an open set $\Omega$ is said to be a limiting Carleman weight if it has non-vanishing gradient on $\Omega$ and if the symbols (\ref{PricSym}) satisfy the condition $(\ref{Hormander})$. This is equivalent to say that 
		\begin{equation}
			\langle \varphi''\nabla\varphi,\nabla\varphi\rangle 
			+ \langle\varphi''\xi,\xi\rangle =0
			\quad when\;\;
			\xi^2 = (\nabla\varphi)^2
			\;\;and\;\;
			\nabla\varphi\cdot\xi=0,
		\end{equation}
		where $\phi''$ is the Hessian matrix of $\phi$.
	\end{deff}

%

  Let $H^s(\Omega)$ be the Sobolev space. We denote by $H^1_{scl}(\Omega)$ the semi-classical Sobolev space of order $1$ on $\Omega$ with associated norm
  \begin{equation}
  	\|u\|^2_{H^1_{scl}(\Omega)}=\|h\nabla u\|^2 + \|u\|^2
  \end{equation}
and by $H^s_{scl}(\RR^n)$ the semi-classical Sobolev space of order $s$ on $\RR^n$ with associated norm
	\begin{equation}
		\|u\|^2_{H^s_{scl}(\RR^n)}=\|\langle hD
		  \rangle^su\|^2_{L^2(\RR^n)}
		=\int (1+h^2\xi^2)^s|\hat{u}(\xi)|^2 d\xi.
	\end{equation}
Let $P=P_0 + h^2q$. Carleman estimates give the following solvablity result \cite{Ferreira2006, CalderonPartial}:
\begin{prop}
  \label{Hahn-Banach}
  	Let $\varphi$ be a limiting Carleman weight and $q\in L^\infty(\Omega)$. Let $0\leq s\leq 1$. There exists $h_0>0$ such that for $0< h\leq h_0$ and for every $w\in H^{s-1}_{scl}(\Omega)$, there exists $u\in H^s_{scl}(\Omega)$ such that
  	\begin{equation*}
  		e^{\frac{\varphi}{h}} P e^{-\frac{\varphi}{h}}u = v,
  		\quad
  		h\|u\|_{H^s_{scl}(\Omega)}\leq C\|v\|_{H^{s-1}_{scl}(\Omega)}.
  	\end{equation*}
  \end{prop}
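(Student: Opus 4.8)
The plan is to deduce this solvability statement from a Carleman estimate by the standard duality argument — which is exactly what the label of the proposition anticipates — so that the genuine analytic content is pushed into an a priori estimate that I would cite rather than reprove. The essential input, and the step I expect to be the real obstacle, is the \emph{shifted} Carleman estimate for a limiting Carleman weight: after the usual convexification $\varphi_\varepsilon = \varphi + \tfrac{h}{2\varepsilon}\varphi^2$, which upgrades the borderline \Hormander\ condition (\ref{Hormander}) to strict subellipticity, one obtains that there are $C, h_0>0$ such that for $0<h\le h_0$, every $0\le t\le1$, and every $\phi\in C_c^\infty(\Omega)$,
\[
  \frac{h}{C}\,\|\phi\|_{H^t_{scl}(\Omega)} \;\le\; \big\| e^{\varphi/h} P_0 e^{-\varphi/h}\phi \big\|_{H^{t-1}_{scl}(\Omega)}.
\]
I would take this estimate as given, citing \cite{Ferreira2006, CalderonPartial}, since its proof (the convexification together with a semiclassical positivity/G\aa rding argument, and the passage from the $L^2$ form to the shifted $H^t_{scl}$ scale by interpolation) is precisely the content of those references. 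The zeroth-order perturbation $h^2q$ with $q\in L^\infty(\Omega)$ is harmless, because $\|h^2 q\phi\|_{H^{t-1}_{scl}}\le h^2\|q\|_{L^\infty}\|\phi\|_{H^t_{scl}}$ is smaller by a factor $h$ than the left-hand side and can therefore be absorbed for $h\le h_0$ small, giving the same estimate for $P=P_0+h^2q$.

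Next I would note that $-\varphi$ is again a limiting Carleman weight whenever $\varphi$ is, since the condition (\ref{Hormander}) is invariant under $\varphi\mapsto-\varphi$, and that the formal adjoint of $P_\varphi:=e^{\varphi/h}Pe^{-\varphi/h}$ is $P_\varphi^{*}=e^{-\varphi/h}P^{*}e^{\varphi/h}$, a conjugated \Schrodinger\ operator of exactly the same type with weight $-\varphi$ and potential $\bar q$. Hence the Carleman estimate of the previous paragraph applies verbatim to $P_\varphi^{*}$: for every $\phi\in C_c^\infty(\Omega)$ and $0\le t\le1$,
\[
  \frac{h}{C}\,\|\phi\|_{H^t_{scl}(\Omega)} \;\le\; \big\| P_\varphi^{*}\phi \big\|_{H^{t-1}_{scl}(\Omega)}.
\]

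With the adjoint estimate in hand, the construction of $u$ is the Hahn--Banach/Riesz duality. Fix $v\in H^{s-1}_{scl}(\Omega)$ and apply the adjoint estimate with $t=1-s\in[0,1]$. On the linear subspace $\{P_\varphi^{*}\phi:\phi\in C_c^\infty(\Omega)\}\subset H^{-s}_{scl}(\Omega)$ I would define the functional $\ell(P_\varphi^{*}\phi)=\langle v,\phi\rangle$. It is well defined, since the estimate forces $P_\varphi^{*}$ to be injective, and bounded, because
\[
  |\langle v,\phi\rangle| \;\le\; \|v\|_{H^{s-1}_{scl}}\,\|\phi\|_{H^{1-s}_{scl}} \;\le\; \frac{C}{h}\,\|v\|_{H^{s-1}_{scl}}\,\|P_\varphi^{*}\phi\|_{H^{-s}_{scl}}.
\]
By the Hahn--Banach theorem I extend $\ell$ to a bounded functional on all of $H^{-s}_{scl}(\Omega)$ with the same norm, and by the Riesz representation theorem for the semiclassical duality $\big(H^{-s}_{scl}(\Omega)\big)^{*}\cong H^s_{scl}(\Omega)$ there is $u\in H^s_{scl}(\Omega)$ representing it, with $\|u\|_{H^s_{scl}}\le \tfrac{C}{h}\|v\|_{H^{s-1}_{scl}}$, that is, $h\|u\|_{H^s_{scl}}\le C\|v\|_{H^{s-1}_{scl}}$. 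Unwinding the definition, $\langle u,P_\varphi^{*}\phi\rangle=\ell(P_\varphi^{*}\phi)=\langle v,\phi\rangle$ for every $\phi\in C_c^\infty(\Omega)$, which is exactly $P_\varphi u=v$ in the sense of distributions, completing the construction.

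The only points that require care beyond the cited Carleman estimate are the bookkeeping of Sobolev indices across the duality (matching $t=1-s$ on the adjoint side with the target regularity $s$ for $u$, using that the dual index of $s-1$ is $1-s$) and the precise identification of the dual of $H^{-s}_{scl}(\Omega)$ on the bounded domain; both are routine once the a priori estimate is available in the full range $0\le t\le1$. I emphasize that no boundary condition is imposed on the test functions beyond compact support, so the resulting $u$ solves $P_\varphi u=v$ in the interior with no prescribed boundary behaviour, which is all that is needed for the subsequent construction of the CGO solutions.
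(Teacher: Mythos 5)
Your proposal is correct and is essentially the paper's own approach: the paper gives no argument for Proposition \ref{Hahn-Banach} beyond the citation to \cite{Ferreira2006, CalderonPartial}, and your proof --- the shifted Carleman estimate for the conjugated operator (cited from those same sources), absorption of the $h^2q$ perturbation using the factor-$h$ gain, the observation that $-\varphi$ is again a limiting Carleman weight so the estimate applies to the adjoint, and the Hahn--Banach/Riesz duality with the index matching $t=1-s$ --- is precisely the standard argument those references contain. The bookkeeping steps (the $O(h^2)$ absorption and the duality pairing) are handled correctly, so there is nothing to flag.
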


  Now we start to construct CGO solutions by choosing $\varphi\in C^\infty(\Omega)$ to be a limiting Carleman weight and $\psi\in C^\infty(\Omega)$ such that
  \begin{equation}
  	(\nabla\psi)^2 = (\nabla\varphi)^2, \quad
  	\nabla\psi\cdot\nabla\varphi = 0.
  \end{equation}
  Then $\psi$ is a local solution to the Hamilton-Jacobi problem
  \begin{equation}
  	a(x, \nabla\psi) = b(x, \nabla\psi) = 0.
  \end{equation}
  Therefore,
  \begin{eqnarray}
  	\label{P0conj}
  	e^{-\frac{1}{h}(-\varphi+i\psi)} P_0
  	  e^{\frac{1}{h}(-\varphi+i\psi)}a
  	&=& \big[((hD +\nabla\psi)^2 -\nabla\varphi^2) 
  	  + i\big(\nabla\varphi(hD+\nabla\psi)\notag\\
  	&&  +(hD+\nabla\psi)\nabla\varphi\big)\big]a\notag\\
  	&=& (hL - h^2\Delta)a,
  \end{eqnarray}
  where $L$ is the transport operator given by
  \begin{equation}
  	L = \nabla\psi D + D\nabla\psi 
  	    + i(\nabla\varphi D + D\nabla\varphi).
  \end{equation}
  
  There exists a non-vanishing smooth function 
  $a\in C^\infty(\Omega)$, see \cite{Duist1972, CalderonPartial},
  such that
  \begin{equation}
  	La = 0.
  	\label{a:constrain}
  \end{equation}
  
Assume that $q\in L^\infty(\Omega)$ and recall $P = h^2(\Delta + q) = P_0 + h^2q$. Then (\ref{P0conj}) implies that
  \begin{equation*}
  	Pe^{\frac{1}{h}(-\varphi + i\psi)}a = e^{-\varphi/h}h^2\kappa,
  \end{equation*}
with $\kappa = \mathcal{O}(1)$ in $L^\infty$ and hence in $L^2$. Now Proposition \ref{Hahn-Banach} implies there exists $r(x,h)\in H^1_{scl}(\Omega)$ such that
  \begin{equation}
  	\label{r:constrain}
  	\begin{array}{c}
	  	e^{\varphi/h}P e^{\frac{1}{h}(-\varphi + i\psi)}r = -h^2\kappa,
	  	\quad\mathrm{and}\\
	  	\|r\|_{H^1_{scl}(\Omega)}=\|h\nabla r\| +\|r\| \leq Ch, 
	  	\quad\mathrm{for\;some\;} C.
  	\end{array}
  \end{equation}
Hence,
	\begin{equation*}
		P(e^{\frac{1}{h}(-\varphi + i\psi)}(a+r)) = 0,
	\end{equation*}
i.e., we constructed a solution of the \Schrodinger equation of the form
	\begin{equation}
		\label{CGO_sol}
		e^{\frac{1}{h}(-\varphi + i\psi)}(a+r).
	\end{equation}

  In our partial data model, we illuminate on part of the boundary, i.e., $\mathrm{supp}(g)\subseteq \Gamma$. But the CGO solution in (\ref{CGO_sol}) is non-vanishing everywhere. Fortunately, using \cite{CalderonPartial}, we can modify the CGO solution to match our model.
  
  Define $\Gamma_-=\partial\Omega \backslash\Gamma$. Then, $\Gamma_-$ is a strict open subset of $\partial\Omega_-$.
  \begin{prop}
      \label{CGOprop}
      We can construct a solution of
      \begin{equation}
          Pu = 0,\quad u\big|_{\Gamma_-} = 0
      \end{equation}
      of the form
      \begin{equation}
          \label{CGOsolution}
          u = e^{\frac{1}{h}(\varphi+i\psi)}(a + r) + z
      \end{equation}
      where $\varphi, \psi$ and $a$ are chosen as above and 
      $z = e^{i\frac{l}{h}}b(x;h)$ with $b$ a symbol of order
      zero in $h$ and
      \begin{equation*}
          \mathrm{Im}\, l(x) = -\varphi(x) + k(x)
      \end{equation*}
      where $k(x) \sim \mathrm{dist}(x,\partial\Omega_-)$ in a
      neighborhood of $\partial\Omega_-$ and $b$ has its support 
      in that neighborhood. Moreover, $\|r\|_{H^0} = 
      \mathcal{O}(h)$,
      $r|_{\partial\Omega_-} = 0$, 
      $\|(\nabla\varphi\cdot\nu)^{\frac{1}{2}}r\|_{\partial\Omega_+}
      = \mathcal{O}(h^{\frac{1}{2}})$.
  \end{prop}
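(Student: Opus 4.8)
\section*{Proof proposal for Proposition \ref{CGOprop}}

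The plan is to obtain $u$ as a superposition of two pieces: the interior CGO solution $U_1 = e^{\frac{1}{h}(\varphi + i\psi)}(a+r)$ already produced above, now refined so that its remainder is controlled up to the boundary, and a boundary-layer corrector $z$, concentrated in a neighborhood of $\partial\Omega_-$, whose only job is to cancel the trace of $U_1$ on $\Gamma_-$. The starting observation is that the complex phase $\Phi := \psi - i\varphi$ of the main term already solves the eikonal equation $(\nabla\Phi)^2 = 0$: indeed $(\nabla\psi - i\nabla\varphi)^2 = (\nabla\psi)^2 - (\nabla\varphi)^2 - 2i\,\nabla\psi\cdot\nabla\varphi = 0$ by the defining relations of $\psi$. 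Writing $e^{\frac{1}{h}(\varphi+i\psi)} = e^{\frac{i}{h}\Phi}$, I want the corrector phase $l$ to be the second root of the same eikonal equation $(\nabla l)^2 = 0$ sharing the Cauchy data of $\Phi$ on $\partial\Omega_-$, but selected so that $\mathrm{Im}\, l = -\varphi + k$ with $k \ge 0$ vanishing on $\partial\Omega_-$ and growing like $\mathrm{dist}(x, \partial\Omega_-)$ inward. This sign choice forces $|z| = e^{\frac{1}{h}(\varphi - k)}|b|$ to decay exponentially away from $\partial\Omega_-$, so that $z$ is a genuine boundary layer and $b$ may be localized near $\partial\Omega_-$.

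First I would sharpen the solvability statement. Using the boundary-term Carleman estimate of \cite{CalderonPartial} for the weight $\varphi$ --- which controls $h\int_{\partial\Omega_-}|\nabla\varphi\cdot\nu|\,|h\partial_\nu(\cdot)|^2$ by the interior norm and the corresponding integral over $\partial\Omega_+$ --- a Hahn--Banach duality argument produces the remainder $r$ solving $e^{-\varphi/h}P\, e^{\frac{1}{h}(\varphi+i\psi)}r = -h^2\kappa$ with, in addition to $\|r\|_{H^0} = \mathcal{O}(h)$, the boundary properties $r|_{\partial\Omega_-} = 0$ and $\|(\nabla\varphi\cdot\nu)^{1/2}r\|_{\partial\Omega_+} = \mathcal{O}(h^{1/2})$. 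Since $r$ vanishes on $\partial\Omega_- \supset \Gamma_-$, the trace of $U_1$ on $\Gamma_-$ is exactly $e^{\frac{i}{h}\Phi}a$.

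Next I would construct $z = e^{\frac{i}{h}l}b$ by geometric optics. The phase $l$ is obtained by solving $(\nabla l)^2 = 0$ near $\partial\Omega_-$ with $l|_{\partial\Omega_-} = \Phi|_{\partial\Omega_-}$; because $\partial\Omega_-$ is non-characteristic for this eikonal problem (the limiting Carleman weight gives $\nabla\varphi\cdot\nu \ne 0$ there), the reflected branch exists and yields $\mathrm{Im}\, l = -\varphi + k$ with the stated behavior of $k$. The amplitude $b$ solves the associated transport equation $2\nabla l\cdot\nabla b + (\Delta l)b = 0$ with boundary datum $b|_{\Gamma_-} = -a|_{\Gamma_-}$, after which $b$ is multiplied by a cutoff supported in the chosen neighborhood of $\partial\Omega_-$. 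By construction $z|_{\Gamma_-} = -e^{\frac{i}{h}\Phi}a|_{\Gamma_-} = -U_1|_{\Gamma_-}$, so $U_1 + z$ vanishes on $\Gamma_-$, while the eikonal and transport equations make $P z$ a residual that is $\mathcal{O}(h^2)$ after the weight is accounted for and supported in the boundary layer.

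Finally I would make the solution exact. The sum $u_0 = U_1 + z$ satisfies $u_0|_{\Gamma_-} = 0$ and $Pu_0 = Pz =: h^2\rho$ with $\rho$ small and localized near $\partial\Omega_-$. Applying the boundary Carleman solvability once more to this residual yields a correction of CGO form $e^{\frac{1}{h}(\varphi+i\psi)}\tilde r$ with $\tilde r$ small and $\tilde r|_{\partial\Omega_-}=0$; replacing $r$ by $r+\tilde r$ (which preserves the asserted boundary properties) gives the desired $u$ with $Pu = 0$ and $u|_{\Gamma_-}=0$. I expect the principal obstacle to be this solvability step with two-sided boundary control: one must arrange, simultaneously and from a single Carleman estimate, the exact vanishing $r|_{\partial\Omega_-}=0$, the one-sided weighted trace bound on $\partial\Omega_+$, and the interior decay, and then check that the boundary-layer corrector cancels the $\Gamma_-$ trace without reintroducing a nonnegligible interior source. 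The geometric compatibility --- that the reflected eikonal branch exists with $k\sim\mathrm{dist}(\cdot,\partial\Omega_-)$ --- is precisely what ties the construction to the limiting Carleman weight hypothesis.
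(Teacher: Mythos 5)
Your proposal reconstructs precisely the argument that the paper itself never gives --- Proposition \ref{CGOprop} is quoted wholesale from Kenig--Sj\"ostrand--Uhlmann \cite{CalderonPartial} (see the remark following it) --- and your outline is that source's proof: the boundary-term Carleman estimate dualized into a solvability statement producing $r$ with $r|_{\partial\Omega_-}=0$, $\|r\|=\mathcal{O}(h)$ and the weighted $\mathcal{O}(h^{1/2})$ trace bound on $\partial\Omega_+$, the reflected complex phase with $\mathrm{Im}\, l = -\varphi + k$ making $z = e^{il/h}b$ an exponentially damped boundary layer, the transport equation for $b$ cancelling the trace of the main term on $\Gamma_-$, and a final Carleman-based correction; your reordering (exact main term first, corrector second, then a re-correction $\tilde r$ absorbed into $r$) is equivalent to the single correction step there. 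The one claim to amend is the exact solvability of $(\nabla l)^2 = 0$ as a non-characteristic Cauchy problem: for a genuinely complex phase with data on a merely $C^\infty$ (non-analytic) hypersurface no exact local solution need exist, so in \cite{CalderonPartial} the eikonal and transport equations are solved only to infinite order on $\partial\Omega_-$, which your scheme absorbs without change, since the damping factor $e^{-k/h}$ turns the resulting residual in $Pz$ into an $\mathcal{O}(h^\infty)$ source that your final solvability step removes.
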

  
  Remark: According to the proof in \cite{CalderonPartial}, $\mathrm{supp}(z)$ is arbitrarily close to $\partial\Omega_-$ in $\Omega$.

  Notice that $\varphi$ is a limiting Carleman weight, so is $-\varphi$. We construct the CGO solutions  of the form
  \begin{equation}\label{CGOu12}
  \begin{array}{rl}
      \check{u}_1 &= e^{\frac{1}{h}(\varphi+i\psi)}
                    (a_1 + r_1) + z_1\\[3pt]
      \check{u}_2 &= e^{\frac{1}{h}(-\varphi+i\psi)}
                    (a_2 + r_2) + z_2.
  \end{array}\end{equation}
	In particular, we choose
  \begin{equation}
  \label{phipsi_def}
      \varphi(x) = \log|x-x_0|\quad\mathrm{and}\quad
      \psi(x) = d_{S^{n-1}}\left(\frac{x-x_0}{|x-x_0|},
                \omega\right),
  \end{equation}
  where $x_0\in \RR^n\backslash\overline{\ch(\Omega)}$ and 
  $\omega\in S^{n-1}$.

  Note that $z_j$, $j=1,2$, are supported only in an arbitrarily  neighborhood of $\partial\Omega_-$. For the rest of this paper, we will mainly   consider the uniqueness and the stability of the solutions on the  subregion defined by 
  \begin{equation}
      \label{tildeomega}
      \tilde{\Omega} = \Omega \backslash 
      (\mathrm{supp}(z_1)\cup \mathrm{supp}(z_2)).
  \end{equation}
  Remark that a neighborhood of the point $y\in\partial\Omega_+ \cap \partial\Omega_- = \{y\in\partial\Omega|-(y-x_0)\cdot n(y) = 0\}$ is excluded from $\tilde\Omega$. Thus, there exists a fixed constant $\eta>0$, such that  if $x\in\tilde\Omega$ and $y\in\partial\Omega_+\cap\partial\tilde\Omega$, then $-(x-x_0)\cdot n(y) \geq\eta$. In the following proof, we mainly focus on $\tilde\Omega$, where $z_1$ and $z_2$ vanish. 

\subsection{Construction of vector fields and uniqueness result}
  
  We assume that we can impose the complex-valued illuminations 
  $g_j$  
  on $\partial\Omega$ with
  $\mathrm{supp}(g_j)\subseteq\Gamma$ and observe the 
  complex-valued internal data $d_j$, $j=1,2$. Note that, to make up 
  complex-valued $g_j$ and $d_j$, $j=1,2$, we need to four real 
  observations. $d_{j}$ are of the form $d_j = \mu u_j$
  in $\Omega$, where $u_{j}$ is the solutions of
  \begin{equation}
      \label{scheqn2}
  		\Delta u_j + qu_j = 0, \;\;\mathrm{in }\; \Omega \qquad
  		u_j = g_j, \;\; \mathrm{on }\; \partial\Omega, \qquad
  		j = 1, 2.
  \end{equation} 
  Direct calculation gives us that
  $$
      u_1\Delta u_2 - u_2 \Delta u_1 = 0.
  $$

  We assume that $\mu\in C^{k+1}(\bar{\Omega})$ is bounded above 
  and below by positive constants. By substituting $u_j = 
  {d_j}/{\mu}$, we obtain that
  $$
      2(d_1\nabla d_2 - d_2\nabla d_1)\cdot\nabla\mu
      - (d_1\Delta d_2 - d_2\Delta d_1)\mu = 0,
  $$
  or equivalently,
  \begin{equation}
      \label{vectorode}
      \beta_d\cdot\nabla \mu + \gamma_d \mu = 0,
  \end{equation}
  where
  \begin{equation}
      \label{betagamma}
      \begin{array}{rcl}
      \displaystyle \beta_d &:=& \displaystyle
      \chi(x)(d_1\nabla d_2 - d_2\nabla d_1) 
      \\[6pt]
      \displaystyle \gamma_d &:=& \displaystyle
      -\frac{1}{2}\chi(x)(d_1\Delta d_2 - d_2\Delta d_1)
      =\frac{-\beta\cdot\nabla\mu}{\mu}.
  \end{array}\end{equation}
  Here, $\chi(x)$ is any smooth known complex-valued function
  with $|\chi(x)|$ uniformly bounded below by a positive constant
  on $\bar\Omega$. Note that by assumption on $\mu$, we have that
  $\beta_d\in (C^k(\bar{\Omega};\CC))^n$ and
  $\gamma\in C^k(\bar{\Omega}; \CC)$.
  
  The methodology for the reconstruction of $(\mu, q)$ will
  be the same as in \cite{idpat}: we first reconstruct $\mu$
  using the real part or the imaginary part of (\ref{vectorode})
  and the boundary condition $\mu = d / g$ on 
  $\partial\Omega_+$. 
  Then we may recover $u_{j} = d_{j}/\mu$, $j=1,2$,
  explicitly and therefore
  $q$ from the Schr$\mathrm{\ddot{o}}$dinger equation 
  (\ref{scheqn2}). 
  
  The reconstruction of $\mu$ is unique as long as the integral 
  curves of (the real part or the imaginary part of) $\beta_d$ 
  join any interior point  $x\in\Omega$ to a point on
  $\partial\Omega_+$, denoted as $x_+(x)$. This property of 
  $\beta$ is not guaranteed in general, unless the boundary 
  conditions $g_j$, $j=1,2$, are properly chosen. CGO solutions are 
  then employed to construct a family of boundary conditions
  $g_j$, $j=1,2$, which could produce well-performed vector fields 
  $\beta_d$.
  
  To make a distinction from the observed data, we denote the CGO solutions by
  $\check{u_1}$ and $\check{u_2}$, and denote
  the internal data and the vector field constructed from CGO 
	solutions by $\check{d}$ and $\check{\beta}_d$, respectively. 
  
  \begin{prop} \label{uniquemu}
      Let $\beta$ be the the normalized vector field, defined by
      \begin{equation}
          \label{normbeta}
          \beta = \frac{h}{2}\,\beta_d.
      \end{equation}
      There is a open set of $g$ in $C^{k,\alpha}(\partial\Omega)$, 
      with $\mathrm{supp}(g)\subseteq\Gamma$,
      such that, for any small $\epsilon$, 
      \begin{equation}			  
		      \label{betaapprox}
		      \left\|\beta(x) - \mu^2\,\Gamma
		      \frac{x_0 - x}{|x_0 - x|^2}\right\|_{C^{k}\,
		      (\tilde{\Omega},\CC)}
		      \leq C\,h(1+\epsilon) \quad \mathrm{on}\;\tilde{\Omega},
		  \end{equation}
		  where $x_0\in\RR^n\backslash\overline{\ch(\Omega)}$ and
		  $\Gamma$ is a function of class $C^{k}(\Omega)$.
      Therefore, (\ref{vectorode}) admits a unique solution $\mu$ on
      $\tilde\Omega$.
  \end{prop}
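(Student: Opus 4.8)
The plan is to insert the two CGO solutions \ref{CGOu12} into the vector field $\check\beta_d$ of \ref{betagamma}, isolate its leading behavior in $h$, and then read off both the approximation \ref{betaapprox} and the transport structure that pins down $\mu$. Throughout I work on $\tilde\Omega$, where by \ref{tildeomega} the boundary layers $z_1,z_2$ vanish, so $\check u_j = e^{\frac1h(\pm\varphi+i\psi)}(a_j+r_j)$. Writing $\check d_j = \mu\check u_j$ and substituting, the terms carrying $\nabla\mu$ cancel and one is left with $\check\beta_d = \chi\mu^2(\check u_1\nabla\check u_2-\check u_2\nabla\check u_1)$. Differentiating the two factors, the two phases add up to the single phase $e^{2i\psi/h}$, and in the $h^{-1}$ part the contributions $i\nabla\psi$ from the two solutions cancel while those of $\nabla\varphi$ reinforce, leaving $-\tfrac2h\nabla\varphi\,(a_1+r_1)(a_2+r_2)$ as the only $h^{-1}$ term. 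After multiplying by the normalization $\tfrac h2$ from \ref{normbeta} and using $\nabla\varphi = \frac{x-x_0}{|x-x_0|^2}$ from \ref{phipsi_def}, this produces the leading field $\mu^2(a_1+r_1)(a_2+r_2)\,e^{2i\psi/h}\chi\,\frac{x_0-x}{|x_0-x|^2}$, the remaining amplitude-gradient terms being multiplied by the extra factor $\tfrac h2$.

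I would fix the free factor $\chi$ (smooth, modulus bounded below) to be a unimodular multiple of $e^{-2i\psi/h}$, which cancels the oscillatory phase and identifies the function $\Gamma$ of \ref{betaapprox} with $a_1a_2$ (up to this unimodular factor); since the amplitudes $a_1,a_2$ are the non-vanishing smooth solutions of \ref{a:constrain}, $\Gamma\in C^k(\Omega)$ is non-vanishing and can even be normalized to be real and positive. With this choice, the difference $\beta-\mu^2\Gamma\frac{x_0-x}{|x_0-x|^2}$ splits into the amplitude error $\mu^2(a_1r_2+a_2r_1+r_1r_2)\frac{x_0-x}{|x_0-x|^2}$ and the $\tfrac h2$-scaled gradient terms $\tfrac h2\mu^2\big((a_1+r_1)\nabla(a_2+r_2)-(a_2+r_2)\nabla(a_1+r_1)\big)$. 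Establishing that this difference is $O(h(1+\epsilon))$ in $C^k(\tilde\Omega)$ is the heart of the proposition and the main obstacle: it forces one to control the CGO remainders $r_j$ and their first derivatives in $C^k$ on the compactly contained set $\tilde\Omega$, whereas Proposition \ref{Hahn-Banach} and \ref{r:constrain} only supply the semiclassical bound $\|r_j\|_{H^1_{scl}} = O(h)$. I would upgrade this by interior regularity: on $\tilde\Omega\Subset\Omega$ the remainder solves a semiclassically elliptic conjugated \Schrodinger equation with right-hand side of size $h^2$, so its higher semiclassical Sobolev norms can be bootstrapped, and the regularity hypothesis $q\in H^{n/2+k+\varepsilon}$ together with the Sobolev embedding $H^{n/2+k+\varepsilon}\hookrightarrow C^k$ is what converts these into the required $C^k$ control; the small loss incurred in this passage is exactly what the factor $(1+\epsilon)$ records. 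This step is delicate because plain high-order norms of an $h$-oscillatory remainder are not automatically small, and it is where the smoothness assumptions on the coefficients are genuinely used.

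Finally I would deduce uniqueness of $\mu$ from \ref{vectorode}. The property \ref{betaapprox} is stable under small $C^{k,\alpha}$ perturbations of the illuminations, which yields the claimed open set of admissible $g$ with $\mathrm{supp}(g)\subseteq\Gamma$. For $h$ small, \ref{betaapprox} shows that (taking real parts, since $\mu$ is real) the transport field $\mathrm{Re}\,\beta$ is a non-vanishing field that is $O(h)$-close to the radial field $\mu^2\Gamma\frac{x_0-x}{|x_0-x|^2}$ pointing toward $x_0$. Because $x_0\in\RR^n\setminus\overline{\ch(\Omega)}$, every integral curve of the radial field issued from a point of $\tilde\Omega$ leaves $\Omega$ through $\partial\Omega_+$, and the uniform transversality constant $\eta>0$ recorded after \ref{tildeomega} guarantees that this persists for the $O(h)$-perturbed field. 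Along each such characteristic, the real part of \ref{vectorode} is a linear first-order ODE for $\mu$ whose endpoint value on $\partial\Omega_+$ is known from the boundary data $\mu = d/g$ (available since $\partial\Omega_+\subset\Gamma$ and the CGO boundary values are non-vanishing there); integrating inward from $\partial\Omega_+$ determines $\mu$ uniquely on all of $\tilde\Omega$, which is the assertion.
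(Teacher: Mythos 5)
Your overall route is the paper's route: substitute the CGO solutions (\ref{CGOu12}) into (\ref{betagamma}) on $\tilde\Omega$ where $z_1,z_2$ vanish, note that the $\nabla\mu$ terms cancel, choose $\chi$ to be (a unimodular multiple of) $e^{-2i\psi/h}$ so the phases cancel, observe that the $i\nabla\psi/h$ contributions cancel while the $\nabla\varphi/h$ contributions reinforce, pass from the CGO field to the measured field by taking $g_j$ in a $C^{k,\alpha}$-neighborhood of $\check u_j|_{\partial\Omega}$ with $g_j=0$ on $\partial\Omega\backslash\Gamma$ (elliptic regularity giving the analogues of (\ref{uconstrain})--(\ref{dapprox})), and finally integrate the transport equation (\ref{vectorode}) along characteristics from $\partial\Omega_+$, using the transversality constant $\eta$ and the boundary values $\mu = d/g$. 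All of this matches the paper's proof, including the sign conventions and the role of $x_0\notin\overline{\ch(\Omega)}$.

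The genuine divergence, and the gap, is in the treatment of the remainders $r_j$. The paper sets $\Gamma = (a_1+r_1)(a_2+r_2)$, absorbing the remainders into the coefficient of the leading radial field, so the $h^{-1}$-weighted term carries \emph{no} error at all and only the amplitude-derivative term $H$, which comes with an explicit prefactor $h/2$, remains to be bounded. You instead set $\Gamma = a_1a_2$, which obliges you to prove that $\mu^2(a_1r_2+a_2r_1+r_1r_2)\nabla\varphi$ is $\mathcal{O}(h)$ in $C^{k}(\tilde\Omega)$, and the mechanism you propose for this, interior regularity plus Sobolev embedding, does not deliver it. The conjugated operator is not semiclassically elliptic (its semiclassical symbol $(\xi+\nabla\psi)^2-|\nabla\varphi|^2+2i\nabla\varphi\cdot(\xi+\nabla\psi)$ vanishes on a nonempty characteristic set), and even granting a bootstrap of the form $\|r_j\|_{H^m_{scl}}=\mathcal{O}(h)$ for all $m$, the conversion to classical norms costs a factor $h^{-1}$ per derivative plus $h^{-n/2}$ from the embedding: the bound (\ref{r:constrain}) is perfectly compatible with $\|r_j\|_{C^k(\tilde\Omega)}$ being of size $h^{1-k-n/2}$, i.e.\ unbounded, and the factor $(1+\epsilon)$ cannot absorb this. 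To make your bookkeeping work one needs remainders genuinely small in $C^k$, which requires a finer CGO construction (a higher-order transport hierarchy for the amplitude, or $H^s$ remainder estimates up to $s=n/2+k+\varepsilon$ as in the full-data argument of \cite{idpat}); alternatively, simply adopt the paper's definition of $\Gamma$, whereupon your error term collapses to the paper's $H$. (To be fair, the paper's assertions that this $\Gamma$ is of class $C^k$ uniformly in $h$ and that $H=\mathcal{O}(h)$ in $C^k$ quietly rest on the same unproved control of $r_j$; but your variant places that unproved estimate at the center of the argument, at precisely the step you yourself identified as the heart of the proposition.)
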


  \begin{proof}
    Let $\check u_1, \check u_2$ be CGO solutions in 
    (\ref{CGOu12}). By 
    substituting $\check{d}_j = \mu\check{u}_j$ and
    $\chi(x) = e^{-\frac{2i}{h}\psi}$ in   
    (\ref{betagamma}), we find $\check{\beta}_d$, 
    restricted to $\tilde{\Omega}$, is given by
	  \begin{align*} 
	      \check{\beta}_d = \mu^2 \Big(
	      &- \frac{2\nabla\varphi}{h}(a_1 + r_1)(a_2 + r_2)
	       + (a_1 + r_1)(\nabla a_2 + \nabla r_2)\\
	      &- (a_2 + r_2)(\nabla a_1 + \nabla r_1)
	      \Big).
	  \end{align*}
	  We may then define, on $\tilde{\Omega}$,
	  \begin{equation}
	      \label{vectorest}
	      \check{\beta} = \frac{h}{2} \check{\beta}_d
	      = - \mu^2\,\Gamma\,\nabla\varphi + \mu^2H,
	  \end{equation}
	  where $\Gamma = (a_1 + r_1)(a_2 + r_2)$ and 
	  $H = \frac{h}{2}(  (a_1 + r_1)(\nabla a_2 + \nabla r_2)
	       - (a_2 + r_2)(\nabla a_1 + \nabla r_1)) \leq C_0 h$ 
	  for some constant $C_0$. Also note that, from (\ref{phipsi_def}), 
	  $\nabla\phi = \frac{x_0-x}{|x_0-x|^2}$.

	  		  
	  We will next choose appropriate boundary conditions 
		$g_j$, $j=1,2$, on
	  $\partial\Omega$, which could lead to small
	  $\|\beta_d - \check{\beta}_d\|$ in $\Omega$.		  
	  In particular, recall that $\check{u}_j=0$ on $\partial
	  \Omega_-$, for some $\epsilon > 0$, we choose $g_j\in 
	  C^{k,\alpha}(\partial\Omega)$ and $g_j=0$ on 
	  $\partial\Omega_-$, such that,
	  \begin{equation}
	    \label{gconstrain} 
	    \|g_j - \check{u}_j\|_{C^{k,\alpha}(\partial\Omega)}
	    \leq \epsilon \quad \mathrm{on}\;\partial\Omega,
	    \quad j = 1,2.
	  \end{equation}
	  Let $u_j$ be the solutions of (\ref{scheqn2}) with 
	  boundary conditions
	  $g_j$ from (\ref{gconstrain}). By elliptic regularity, 
	  we have that
	  \begin{equation}
	  		\label{uconstrain}
	      \|u_j - \check{u}_j\|_{C^{k+1}(\bar{\Omega},\CC)}
	    \leq C \epsilon \quad \mathrm{on}\;\bar{\Omega},
	    \quad j = 1,2,
	  \end{equation}
	  for some positive constant $C$. Notice that $d_j = \mu u_j$
	  and $\mu\in C^{k+1}(\bar{\Omega})$, we deduce that
	  \begin{equation}
	     \label{dapprox}
	     \|d_j - \check{d}_j\|_{C^{k+1}(\bar{\Omega},\CC)}
	     \leq C_0 \epsilon \quad \mathrm{on}\;\bar{\Omega},
	     \quad j = 1,2.
	  \end{equation}
	  
	  Thus, restricting to $\tilde{\Omega}$, (\ref{vectorest})
	  and (\ref{dapprox}) induce (\ref{betaapprox}),
	  which indicates, when $h, \epsilon$ are sufficiently small, 
	  $\beta$ is close to a non-vanishing vector 
	  $-\mu^2\,\Gamma\,\nabla\varphi = 
	  \mu^2\,\Gamma\,\frac{x_0 - x}{|x_0 - x|^2}$. 
	  Approximately, the integral curves of $\beta$ are rays
	  from any $x\in\tilde{\Omega}$ to $x_0\in\RR^n\backslash
	  \overline{\ch(\Omega)}$, intersecting $\partial\Omega_+$
	  at a point $x_+(x)$. Therefore, 
	  with $\mu = d_1/g_2 = d_2/g_2$ known
	  on $\partial\Omega_+$, 
	  \begin{equation}
	      \label{normvecode}
	      \beta\cdot\nabla\mu +\gamma\mu = 0,
	      \quad \gamma = \frac{h}{2}\gamma_d
	  \end{equation}
	  provides a unique reconstruction for $\mu$, so does 
	  (\ref{vectorode}). More precisely, consider the flow
	  $\theta_x(t)$ associated to $\beta$, i.e., the solution to
	  \begin{equation*}
	      \dot{\theta}_x(t) = \beta(\theta_x(t)),
	      \qquad \theta_x(0) = x \in\tilde{\Omega}.
	  \end{equation*}
	  By the Picard-Lindel$\mathrm{\ddot{o}}$f theorem, the above
	  equations admit unique solution $\theta_x$ while $\beta$ is of class
	  $C^1$. Since $\beta$ is non-vanishing, $\theta_x$ reaches
	  $x_+(x)\in\partial\Omega_+$ in a finite time, denoted as 
	  $t_+(x)$, i.e.,
	  \begin{equation*}
	      \theta_x(t_+(x)) = x_+(x).
	  \end{equation*}
	  Then by the method of characteristics, $\mu(x)$ is uniquely
	  determined by
	  \begin{equation}
	      \label{muchar}
	      \mu(x) = \mu_0(x_+(x))e^{-\int_0^{t_+(x)}
	      \gamma(\theta_x(s))ds},
	  \end{equation}
	  where $\mu_0 = d/g$ on $\partial\Omega_+$. This finishes the
	  proof.
  \end{proof}

  Let us define the set of parameters
  \begin{equation}
  		\begin{array}{c}
  		     \mathcal{P} \,=\,\Big\{
  		     (\mu,q)\in C^{k+1}({\Omega})\times
  		     H^{\frac{n}{2}+k+\epsilon}({\Omega});\;
  		     0 \mathrm{\; is\; not\; an\; eigenvalue\; of\; }\,
  		     \Delta + q,\\
  		     \|\mu\|_{C^{k+1}({\Omega})}
  		     +\|q\|_{H^{\frac{n}{2}+k+\epsilon}({\Omega})}
  		     \leq P < \infty
  		     \Big\}.
  		\end{array}
  \end{equation}
  The above construction of the vector field allows us to obtain
  the following uniqueness result.
  
  \begin{thm} \label{uniquenessthm}
      Let $\Omega$ be a bounded, open subset of $\RR^n$ with
      boundary of class $C^{k+1}$. Let $(\mu,q)$ and $(\tilde{\mu},
      \tilde{q})$ be two elements in $\mathcal{P}$ and
      $\tilde{\Omega}$ be defined in $(\ref{tildeomega})$. When
      $h$ and $\epsilon$ are sufficiently small, for $j=1,2$, let
      $g_j$ be constructed according to (\ref{gconstrain})
      with the CGO solutions $\check{u}_j$. 
      $d_j$ and $\tilde{d}_j$ are two sets of
      observations of the internal data on $\Omega$.
      
      Restricting to $\tilde{\Omega}$, $d_j=\tilde{d}_j$ 
      implies that $(\mu,q)=(\tilde{\mu},\tilde{q})$.
  \end{thm}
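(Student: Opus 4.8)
The plan is to propagate the equality of the internal data through the transport-equation reconstruction of Proposition \ref{uniquemu}, recovering $\mu$ first and then $q$. First I would note that the coefficients $\beta_d$ and $\gamma_d$ in (\ref{betagamma}) are built solely from the data $d_1,d_2$ together with the fixed weight $\chi$; hence $d_j=\tilde d_j$ on $\tilde\Omega$ forces $\beta_d=\tilde\beta_d$ and $\gamma_d=\tilde\gamma_d$ there. Since $(\mu,q)$ and $(\tilde\mu,\tilde q)$ each produce two solutions of (\ref{scheqn2}) sharing a common potential, both pairs satisfy $u_1\Delta u_2-u_2\Delta u_1=0$; substituting $u_j=d_j/\mu$ (respectively $\tilde u_j=\tilde d_j/\tilde\mu$) then shows that $\mu$ and $\tilde\mu$ solve the \emph{same} first-order linear transport equation (\ref{vectorode}) on $\tilde\Omega$, with identical coefficients.

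Next I would match the two functions on $\partial\Omega_+$. Because the CGO amplitudes $a_j$ are non-vanishing and $r_j=\mathcal{O}(h)$, the traces of the CGO solutions $\check u_j$ do not vanish on $\partial\Omega_+$; by (\ref{gconstrain}) the chosen illuminations $g_j$ are $\epsilon$-close to them in $C^{k,\alpha}$, so $g_j\neq 0$ on $\partial\Omega_+$ once $\epsilon$ is small. Since $u_j=g_j$ on the boundary, we have $d_j=\mu g_j$ there, whence $\mu=d_j/g_j=\tilde d_j/g_j=\tilde\mu$ on $\partial\Omega_+$. Thus $\mu$ and $\tilde\mu$ solve the same transport equation and agree on the outflow set.

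I would then invoke the non-degeneracy established in Proposition \ref{uniquemu}: by the estimate (\ref{betaapprox}), for $h,\epsilon$ small the common normalized field $\beta=\tfrac{h}{2}\beta_d$ is a small perturbation of the radial field $\mu^2\,\Gamma\,\frac{x_0-x}{|x_0-x|^2}$, hence non-vanishing, and its integral curves $\theta_x$ reach a point $x_+(x)\in\partial\Omega_+$ in finite time $t_+(x)$. Applying the characteristic formula (\ref{muchar}) to the common coefficients $\beta,\gamma$ and the common boundary value yields the same result for $\mu$ and $\tilde\mu$, so $\mu=\tilde\mu$ on $\tilde\Omega$. It then follows that $u_j=d_j/\mu=\tilde d_j/\tilde\mu=\tilde u_j$ on $\tilde\Omega$, and since each $u_j$ inherits the non-vanishing of the underlying CGO solution for $h$ small, the equation (\ref{scheqn2}) gives $q=-\Delta u_j/u_j=-\Delta\tilde u_j/\tilde u_j=\tilde q$, completing the argument.

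The main obstacle is verifying the non-degeneracy hypotheses that make this characteristic reconstruction well posed for the \emph{common}, data-built field: that $\beta_d$ is genuinely non-vanishing with every integral curve exiting through $\partial\Omega_+$ rather than re-entering $\Omega$ or meeting $\Gamma_-$, and that $g_j$ stays bounded away from zero on $\partial\Omega_+$. Both reduce to quantitative control of the CGO remainders $r_j$ and of the parameters $h,\epsilon$, as encapsulated in (\ref{betaapprox}) and (\ref{gconstrain}); once these are secured the remaining steps are purely algebraic divisions and differentiations valid on $\tilde\Omega$.
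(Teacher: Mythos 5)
Your proposal is correct, and for the recovery of $\mu$ it follows the paper's route exactly: the paper's proof simply invokes Proposition \ref{uniquemu} ("$\mu$ is uniquely reconstructed on $\tilde\Omega$"), while you spell out the details that proposition encapsulates --- that $\beta_d,\gamma_d$ are built from the data alone, so equal data give the same transport equation, that the boundary values $\mu = d_j/g_j$ agree on $\partial\Omega_+$, and that the characteristic formula (\ref{muchar}) then forces $\mu = \tilde\mu$. Where you genuinely diverge is the recovery of $q$. The paper uses a soft argument: by unique continuation for $\Delta + q$, the common solution $u = d_j/\mu$ cannot vanish on any nonempty open subset of $\tilde\Omega$ (else $u \equiv 0$, contradicting the nonzero boundary data), so $\{|u|>0\}\cap\tilde\Omega$ is open and dense, $q = -\Delta u/u = \tilde q$ holds there, and continuity of $q,\tilde q$ extends the equality to all of $\tilde\Omega$. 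You instead argue that $u_j$ is nowhere vanishing on $\tilde\Omega$ because it is $C\epsilon$-close to the non-vanishing CGO solution $\check u_j$; this is precisely the argument the paper itself deploys later in the proof of the stability result (Theorem \ref{stabthm}), so it is admissible by the paper's own standards, but it is more demanding than needed here. It requires a pointwise lower bound on $|\check u_j| = e^{\pm\varphi/h}|a_j + r_j|$, hence sup-norm (not just the stated $H^1_{scl}$, i.e.\ $L^2$-type) smallness of the remainders $r_j$, and it couples the parameters: since $|\check u_2|$ can be of size $e^{-\varphi/h}$, you must choose $\epsilon$ small \emph{after} fixing $h$. The paper's unique-continuation argument costs nothing quantitative and works as soon as the boundary data are nonzero, which is what it buys over your version; your version, on the other hand, is the one that upgrades to stability estimates, so nothing is lost in the broader scheme of the paper.
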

  
  \begin{proof}
      We have proved that, for $j=1,2$, 
      when $g_j$ is properly chosen, 
      $\mu$ is uniquely reconstructed on $\tilde\Omega$, 
      i.e., $\mu = \tilde{\mu}$. 
      Directly, $d_j = \tilde{d}_j$ also implies
      $u:=u_j = \tilde{u}_j$. By unique continuation,
      $u$ cannot vanish on an open set in $\tilde\Omega$ different 
      from the empty set. Otherwise $u$ vanishes everywhere and 
      this is impossible to satisfy the boundary conditions. 
      Therefore, the set $F=\{|u|>0\}\cap\tilde\Omega$
      is open and $\bar{F} = \bar{\tilde\Omega}$ since the 
      complement of $\bar{F}$ has to be empty. By continuity,
      this shows that $q=\tilde{q}$ on $\tilde\Omega$.
  \end{proof}

  Since the coefficient $q$ in the Schr$\mathrm{\ddot{o}}$dinger 
  equation is unknown, the CGO solutions $\check u_j$, $j=1,2$, 
  cannot be explicitly determined.
  Therefore, although we know that $g_j$ can be chosen from
  a open set close to $\check u_j$, a more explicit
  characterization of the open set is lacking.
  
  Also notice that the parameters $h$ and $\epsilon$ need to be
  small to make $\beta$ flat enough, while cannot be too small,
  otherwise $g_{1}$ will be so large that the imposed illuminations 
  become physically infeasible and
  $g_2$ will be so small that the imposed illuminations 
  become physically undetectable.

\section{Stability of the reconstruction}
	In this section, we consider the stability of the proposed 
  reconstruction method. We divide  the proof into two cases. When
  two complex-valued data are measured, a strict convexity is
  assumed on the domain of interest. In another case, $2n$
  complexed-valued data are measured and the stability result
  follows without any geometric condition on the domain.

\subsection{Stability Result for $2$ complex observations}
  To avoid the case 
  when the vector field is approximately parallel to the boundary 
  $\partial\Omega_+$, i.e., $\beta(x_+(x))\cdot 
  n(x_+(x))$ is small, for a long time, we impose the 
  convexity condition as established in Hypothesis
  (\ref{ConvHypo}).

  Recall that $\theta_x(t)$ is the flow associated  to $\beta$.
  Assume $\theta_x(t)$ reaches the boundary at $x_+$ and at time
  $t_+$, i.e., $\theta_x(t_+) = x_+\in\partial\Omega$. Similar
  notations are use for $\tilde\beta$.
  We have the equality
  \begin{equation*}
      \theta_x(t) - \tilde{\theta}_x(t)
      = \int_0^t [\beta(\theta_x(s)) - 
      \tilde{\beta}(\tilde{\theta}_x(s))]ds.
  \end{equation*}    
  Using the Lipschitz continuity of $\beta$ and Gronwall's
  lemma, we thus deduce the existence of a constant $C$ such
  that   
  $$
      |\theta_x(t) - \tilde{\theta}_x(t)| 
      \leq C t \|\beta-\tilde{\beta}\|_{C^0(\Omega)}
  $$
  uniformly in $t$ knowing that all characteristics exit
  $\Omega$ in finite time and provided that $\theta_x(t)$
  and $\tilde{\theta}_x(t)$ are in $\bar{\Omega}$.
  
  Such estimates are stable with respect to modifications in the
  initial conditions. Let us define $W(t) = D_x\theta_x(t)$. Then 
  $W$ solves the equation $\dot{W} = D_x\beta(\theta_x)$ with 
  $W(0) = I$ and by using Gronwall's lemma once more, we deduce 
  that
  $$
      |W(t) - \tilde{W}(t)| \leq Ct
      \|D_x\beta - D_x\tilde{\beta}\|_{C^0(\Omega)},
  $$
  for all times provided that $\theta_x(t)$ and 
  $\tilde{\theta}_x(t)$ are in $\bar{\Omega}$. As a 
  consequence, since $\beta$ and $\tilde{\beta}$ are in 
  $C^k(\bar{\Omega})$, then we obtain similarly that:
  \begin{equation}\label{thetastable}
      |D_x^k\theta_x(t) - D_x^k\tilde{\theta}_x(t)|
      \leq Ct\|\beta - \tilde{\beta}\|_{C^k(\Omega)},
  \end{equation}
  and this again for all times $\theta_x(t)$ and 
  $\tilde{\theta}_x(t)$ are in $\bar{\Omega}$.
  To simplify the notation, we define 
  $\delta = \|\beta - \tilde{\beta}\|_{C^k(\Omega)}$.
  
  Recall that by the definition of $\tilde{\Omega}$ in (\ref{tildeomega}). Let $x\in\tilde\Omega$ and $y\in\partial\Omega_+\cap\partial\tilde\Omega$, then $\beta(x)\cdot n(y) > \eta$, for some constant $\eta>0$, where $\beta$ satisfies (\ref{betaapprox}) and $n(y)$ is the unit outer normal.
    
  \begin{lemma} \label{xtlemma}
    Let $k\geq 1$ and assume that $\beta$ and $\tilde{\beta}$
    are $C^k(\bar{\Omega})$ vector fields that are sufficiently 
    flat, i.e., $h$ is sufficiently small. Let us assume that 
    $\partial\Omega_+$ is sufficiently convex so that Hypothesis 
    \ref{ConvHypo} holds for some $R<\infty$. Then, restricting 
    to $\tilde{\Omega}$, we have that
    \begin{equation}
        \|x_+ - \tilde{x}_+\|_{C^{k}{(\tilde{\Omega})}}
          +  \|t_+ - \tilde{t}_+\|_{C^{k}{(\tilde{\Omega})}}
        \leq C \|\beta - \tilde{\beta}
             \|_{C^{k}{(\tilde{\Omega})}},
    \end{equation}
	  where $C$ is a constant depending on $h$ and $R$.
  \end{lemma}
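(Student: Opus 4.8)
The plan is to realize the exit time $t_+(x)$ as the implicit solution of a scalar equation and then differentiate that equation, using the uniform transversality of $\beta$ at $\partial\Omega_+$ as the quantity that can be inverted. Concretely, I would fix a $C^{k+1}$ defining function $\rho$ for the boundary near $\partial\Omega_+$, normalized so that $\rho<0$ in $\Omega$, $\rho=0$ on $\partial\Omega$, and $\nabla\rho=n$ on $\partial\Omega$. The exit time is then characterized by
\begin{equation*}
   F(x,t):=\rho(\theta_x(t))=0,\qquad \theta_x(t_+(x))=x_+(x).
\end{equation*}
Differentiating in $t$ gives $\partial_t F=\nabla\rho(\theta_x(t))\cdot\beta(\theta_x(t))$, which at $t=t_+(x)$ equals $n(x_+)\cdot\beta(x_+)$. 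By the non-tangency property recorded just before the lemma, together with the flatness of $\beta$ (so that $\beta$ is close to the radial field $\mu^2\Gamma\tfrac{x_0-x}{|x_0-x|^2}$ of (\ref{betaapprox})) and Hypothesis \ref{ConvHypo}, this quantity is bounded below by a fixed constant, say $\ge\eta/2>0$, uniformly for $x\in\tilde\Omega$. The implicit function theorem then yields $t_+\in C^{k}(\tilde\Omega)$ and $x_+=\theta_x(t_+(x))\in C^{k}(\tilde\Omega)$, with derivatives controlled by the transversality lower bound; the same applies to $\tilde t_+,\tilde x_+$.

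For the $C^0$ estimate I would subtract the two defining relations. Writing $0=\rho(\theta_x(t_+))-\rho(\tilde\theta_x(\tilde t_+))$ and inserting $\tilde\theta_x(t_+)$,
\begin{equation*}
   0=\big[\rho(\theta_x(t_+))-\rho(\tilde\theta_x(t_+))\big]
    +\big[\rho(\tilde\theta_x(t_+))-\rho(\tilde\theta_x(\tilde t_+))\big].
\end{equation*}
The first bracket is bounded by $\|\nabla\rho\|_\infty\,|\theta_x(t_+)-\tilde\theta_x(t_+)|\le C t_+\,\delta$ via (\ref{thetastable}) at order $0$, where $\delta=\|\beta-\tilde\beta\|_{C^{k}}$. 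The second bracket equals $\big(\nabla\rho\cdot\tilde\beta\big)(\xi)\,(t_+-\tilde t_+)$ at an intermediate point $\xi$, and its coefficient is again bounded below by $\eta/2$ by transversality for $\tilde\beta$. Since $\Omega$ is bounded and $|\beta|$ is bounded below (flatness, with $\Gamma=(a_1+r_1)(a_2+r_2)$ non-vanishing), every characteristic exits in a uniformly bounded time $t_+\le T_0<\infty$; solving then gives $|t_+-\tilde t_+|\le C\delta$ pointwise on $\tilde\Omega$.

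To upgrade to $C^{k}$, I would differentiate the identity $\rho(\theta_x(t_+(x)))=0$ up to $k$ times in $x$. At each order the top-order contribution is $\big(\nabla\rho\cdot\beta\big)\,D_x^{m}t_+$ plus terms that are polynomial in the lower-order derivatives of $t_+$ and in $D_x^{\le m}\theta_x$ (with $\rho$-derivatives bounded because $\partial\Omega$ is $C^{k+1}$). The transversality lower bound lets me solve for $D_x^{m}t_+$ at each step, and taking the difference of the $\beta$- and $\tilde\beta$-systems, bounding every occurrence of $D_x^{\le k}\theta_x-D_x^{\le k}\tilde\theta_x$ by (\ref{thetastable}), an induction on $m$ yields $\|t_+-\tilde t_+\|_{C^{k}(\tilde\Omega)}\le C\delta$. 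Finally I write $x_+-\tilde x_+=\theta_x(t_+)-\tilde\theta_x(\tilde t_+)=[\theta_x(t_+)-\tilde\theta_x(t_+)]+[\tilde\theta_x(t_+)-\tilde\theta_x(\tilde t_+)]$; the first difference is controlled in $C^{k}$ directly by (\ref{thetastable}), and the second by the chain rule using the $C^{k}$ bound just obtained for $t_+-\tilde t_+$ together with the uniform $C^{k}$ bounds on $\tilde\theta$. Adding the two estimates gives the claim, with the constant $C$ absorbing the transversality factor $\eta^{-1}$ (hence the dependence on $h$) and the bound $R$.

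The main obstacle I anticipate is the $C^{k}$, rather than merely $C^0$, control: the implicit differentiation produces an expanding family of terms, and the estimate only closes because the inverted coefficient $\nabla\rho\cdot\beta$ stays uniformly away from zero and the exit time $t_+$ stays uniformly bounded on $\tilde\Omega$. Both of these are exactly what Hypothesis \ref{ConvHypo} and the flatness of $\beta$ secure: convexity forces each (nearly radial) trajectory to meet $\partial\Omega_+$ exactly once and transversally with angle bounded below, preventing the grazing behavior that would make $t_+$ large and $\nabla\rho\cdot\beta$ small and would destroy the bound.
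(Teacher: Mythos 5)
Your argument is correct in substance but takes a genuinely different route from the paper's. The paper uses Hypothesis \ref{ConvHypo} quantitatively: it centers coordinates at the radius-$R$ ball tangent at $x_+$, extrapolates $\tilde\theta_x$ linearly past time $t_+$, and converts the exit condition into a quadratic equation $A(\tilde t_+-t_+)^2+B(\tilde t_+-t_+)+C=0$, solved by the quadratic formula; the transversality bound $\tilde\beta\cdot n(x_+)>\eta$ keeps $B$ (hence the denominator $B+\sqrt{B^2-4AC}$) away from zero, and the $C^k$ estimate follows by differentiating that explicit formula. You instead characterize $t_+$ implicitly via $\rho(\theta_x(t))=0$ for a boundary defining function $\rho$ and invert the very same transversality quantity $\nabla\rho\cdot\beta\geq\eta/2$, using the mean value theorem for the $C^0$ bound and implicit differentiation with induction for the $C^k$ bound. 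Both proofs consume exactly the same two ingredients, the flow stability (\ref{thetastable}) and the uniform non-tangency at exit points recorded just before the lemma, so they are morally equivalent; yours is the more standard and more robust packaging (it needs only a $C^{k+1}$ boundary plus transversal, unique exit, and avoids the paper's murky ratio factor $K$), while the paper's computation makes the dependence of the constant on $R$ explicit. Two points you gloss over deserve a line each. First, the intermediate point in your mean value step lies on the arc $\tilde\theta_x([t_+,\tilde t_+])$, so you must verify that this whole arc remains in the neighborhood of $\partial\Omega_+$ where $\nabla\rho\cdot\tilde\beta\geq\eta/2$ holds; this follows from a short bootstrap, since $\rho$ increases at rate at least $\eta/2$ along the flow as long as the arc stays in that neighborhood, forcing exit within time $O(\delta)$. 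Second, boundedness of $\Omega$ together with a lower bound on $|\beta|$ does not by itself bound $t_+$ uniformly (a slow spiral can take arbitrarily long to exit); what bounds the exit time is the near-radial form (\ref{betaapprox}), which makes $|\theta_x(t)-x_0|$ decrease at a definite rate. Neither gap is serious, and the paper's own proof is no more detailed on the corresponding points.
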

  
  \begin{proof}
  Let $x\in\tilde\Omega$. Without loss of generality, we assume $t_+(x)<\tilde{t}_+(x)$. Draw a circle with radius $R$ and tangent to $\Omega$ at $x_+$. We impose a coordinate system by choosing the circle center to be the origin.  Equation (\ref{betaapprox}) and (\ref{thetastable}) gives that
	\begin{equation*}
		|\tilde x_+ - [\tilde\theta_x(t_+) 
		+ \tilde\beta(\tilde\theta_x(t_+))(\tilde t_+ - t_+)]| 
		\leq C (\tilde t_+ - t_+) \delta.
	\end{equation*}
	Thus,
	\begin{equation*}
		|\tilde\theta_x(t_+) 
		+ \tilde\beta(\tilde\theta_x(t_+))(\tilde t_+ - t_+)| 
		\leq R + C (\tilde t_+ - t_+)\delta.
	\end{equation*}
	Directly, we have
	\begin{align}
		&|x_+ + \tilde\beta(\tilde\theta_x(t_+))(\tilde t_+ - t_+)|^2
		 - R^2\notag\\
		&\qquad= |\tilde\theta_x(t_+) 
		+ \tilde\beta(\tilde\theta_x(t_+))(\tilde t_+ - t_+) 
		- (\tilde\theta_x(t_+) - x_+)|^2 - R^2 \notag\\
		&\qquad= K [( R + C (\tilde t_+ - t_+)\delta 
		+ |\tilde\theta_x(t_+) - x_+|)^2 - R^2] . \label{lemmaeq1}
	\end{align}
	where $0<K\leq 1$ is defined by ratio and $K\in C^k$ when $|\tilde\theta_x(t_+) - x_+| > 0$.
	On the other hand, 
	\begin{multline}
		\label{lemmaeq2}
		|x_+ + \tilde\beta(\tilde\theta_x(t_+))(\tilde t_+ - t_+)|^2 
		- R^2 \\
		=  2R \tilde\beta(\tilde\theta_x(t_+))\cdot n(x_+)(\tilde t_+ 
		- t_+) 
		+ |\tilde\beta(\tilde\theta_x(t_+))|^2 (\tilde t_+ - t_+)^2.
	\end{multline}
	Substituting (\ref{lemmaeq1}) into (\ref{lemmaeq2}), we have
	\begin{equation*}
		A(\tilde t_+ - t_+)^2 + B(\tilde t_+ - t_+) + C = 0,
	\end{equation*}
	where
	\begin{align*}
		A &= |\tilde\beta(\theta_x(t_+))|^2 - K C^2\delta^2,\\
		B &= 2R \tilde\beta(\theta_x(t_+))\cdot n(x_+) 
				- 2KC\delta(R + |\tilde\theta_x(t_+) - x_+|),
				\;\mathrm{and}\\
		C &= - 2KR|\tilde\theta_x(t_+) - x_+|
				 - K|\tilde\theta_x(t_+) - x_+|^2.
	\end{align*}
	By the quadratic formula,
	\begin{equation}
	\label{t_est}
		\tilde t_+ - t_+ 
		= \frac{-B + \sqrt{B^2 - 4AC}}{2A} 
		= \frac{-2C}{B + \sqrt{B^2 - 4AC}}. 
	\end{equation}
	Notice that when $x\in\tilde\Omega$ and $x_+\in\partial\Omega_+ \cap \partial\tilde\Omega$, for $h$ sufficiently small, $\tilde\beta(\tilde\theta_x(t_+))\cdot n(x_+) > \eta$ for some fixed $\eta>0$. When $\delta$ is small, we see that $A>0$ and $B>\eta'>0$ for some $\eta'$. Thus, (\ref{thetastable}) and (\ref{t_est}) implies
	\begin{equation}
	\label{t_est2}
		\tilde t_+ - t_+ 
		\leq C' |\tilde\theta_x(t_+) - x_+|
		\leq C'' \|\beta - \tilde\beta\|_{C^0(\tilde\Omega)}.
	\end{equation}
	By taking derivatives of (\ref{t_est}),
	\begin{equation}
		\partial_x^k (\tilde t_+ - t_+) 
		= \sum_{i=0}^k \alpha_i \partial_x^i |\tilde\theta_x(t_+) - x_+|,
	\end{equation}
	where $|\alpha_i|$ is bounded when $\delta$ is small.
	
	We next need to estimate $\partial_x^i |\tilde\theta_x(t_+) - x_+|$. When $i=0$, it is directly from (\ref{thetastable}); When $i=1$, we have
	\begin{align*}
		\partial_x|\tilde\theta_x(t_+) - x_+|
		&\leq |\partial_x (\tilde\theta_x(t_+) - x_+)|\\
		&=  |(\tilde W(t_+) - W(t_+)) + (\tilde\beta(\tilde\theta_x(t_+)) 
				- \beta(\theta_x(t_+)))\partial_x t_+|\\
		&\leq C \|\beta - \tilde\beta\|_{C^1(\tilde\Omega)},
	\end{align*}
	where the inequality follows from (\ref{thetastable}) and the continuity of $\beta$. Similar estimate works for high order derivatives. Thus,
	\begin{equation}
	\label{t_est3}
		\|\tilde t_+ - t_+\|_{C^1(\tilde{\Omega})}
		\leq \|\beta - \tilde\beta\|_{C^1(\tilde{\Omega})}.
	\end{equation}
	
	Furthermore, the estimate of $|x_+ - \tilde x_+|$ follows easily.
	\begin{align*}
		|\partial_x (x_+ - \tilde x_+)|
		&= |\partial_x (\tilde\theta_x(\tilde t_+) - \theta_x(t_+))|\\
		&= |(\tilde W(\tilde t_+) - W(t_+)) 
				+ (\tilde\beta(\tilde t_+)\partial_x\tilde t_+ 
				- \beta(t_+)\partial_x t_+ )|\\
		&\leq C \|\beta - \tilde\beta\|_{C^1(\tilde{\Omega})},
	\end{align*}
	where the inequality follows from (\ref{thetastable}), (\ref{t_est3}) and the continuity of $\beta$ and $W$. High order derivatives follow similarly. This finishes the proof.
\end{proof}
  
\begin{prop} \label{stableprop}
  Let $k\geq 1$. Let $\mu$ and $\tilde{\mu}$ be solutions to
  (\ref{normvecode}) corresponding to coefficients $(\beta, 
  \gamma)$ and $(\tilde{\beta}, \tilde{\gamma})$, respectively, 
  where (\ref{betaapprox}) holds for both $\beta$ and 
  $\tilde{\beta}$.
  
  Let us define $\mu_0 = \mu|_{\partial\Omega}$ and 
  $\tilde{\mu}_0 = \tilde{\mu}|_{\partial\Omega}$, 
  thus $\mu_0, \tilde{\mu}_0\in 
  C^k(\partial\Omega)$. We also assume $h$ is sufficiently 
  small and $\Omega$ is sufficiently convex that Hypothesis 
  \ref{ConvHypo} holds for some $R<\infty$. Then there is a 
  constant $C$ such that
  \begin{multline}
      \|\mu - \tilde{\mu}\|_{C^{k-1}(\tilde{\Omega})} \\
     \leq C\|\mu_0\|_{C^{k}(\partial{\Omega}_+)}
     ( \|\beta - \tilde{\beta}\|_{C^{k}(\tilde{\Omega})}
     + \|\gamma - \tilde{\gamma}\|_{C^{k-1}(\tilde{\Omega})})
     +C\|\mu_0 - \tilde{\mu}_0\|_{C^{k}(\partial{\Omega}_+)}.
  \end{multline}
\end{prop}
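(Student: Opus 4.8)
The plan is to start from the closed-form representation (\ref{muchar}) produced by the method of characteristics and differentiate it directly, feeding the resulting expressions into the flow-stability estimate (\ref{thetastable}) and the endpoint estimate of Lemma \ref{xtlemma}. Write $\mu(x) = \mu_0(x_+(x))\,E(x)$ with $E(x) = \exp\!\big(-\int_0^{t_+(x)}\gamma(\theta_x(s))\,ds\big)$, and let $\tilde\mu(x)=\tilde\mu_0(\tilde x_+(x))\,\tilde E(x)$ be the analogous object for $(\tilde\beta,\tilde\gamma)$. Since $\gamma$ is bounded and all characteristics exit $\tilde\Omega$ in finite time, $E$ and $\tilde E$ are bounded above and below by positive constants, so $t\mapsto e^{-t}$ is Lipschitz on the relevant range; this is what lets me pass from differences of exponents to differences of $E$.

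The first step is the telescoping decomposition
\begin{align*}
  \mu - \tilde\mu
  = \big[\mu_0(x_+) - \mu_0(\tilde x_+)\big]E
  + \mu_0(\tilde x_+)\,\big[E - \tilde E\big]
  + \big[\mu_0(\tilde x_+) - \tilde\mu_0(\tilde x_+)\big]\tilde E.
\end{align*}
The last term isolates the boundary-data mismatch: once Lemma \ref{xtlemma} furnishes $C^{k}$-control of $\tilde x_+$ and the chain rule gives $C^{k-1}$-control of $\tilde E$, this term is bounded in $C^{k-1}(\tilde\Omega)$ by $C\|\mu_0-\tilde\mu_0\|_{C^{k}(\partial\Omega_+)}$. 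The first term is pure chain rule: $\|\mu_0(x_+)-\mu_0(\tilde x_+)\|_{C^{k-1}}\le C\|\mu_0\|_{C^k}\|x_+-\tilde x_+\|_{C^{k-1}}$, and Lemma \ref{xtlemma} bounds $\|x_+-\tilde x_+\|_{C^k}$ by $\|\beta-\tilde\beta\|_{C^k(\tilde\Omega)}$, which produces the $\|\mu_0\|_{C^k}\|\beta-\tilde\beta\|_{C^k}$ contribution.

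The middle term requires estimating $\|E-\tilde E\|_{C^{k-1}}$, which—through the Lipschitz bound on the exponential and the Faà di Bruno chain rule—reduces to $\|I-\tilde I\|_{C^{k-1}}$ for $I(x)=\int_0^{t_+(x)}\gamma(\theta_x(s))\,ds$. I split $I-\tilde I$ along its two sources of discrepancy, the integrand and the endpoint:
\begin{align*}
  I - \tilde I
  = \int_0^{t_+}\!\!\big[\gamma(\theta_x(s)) - \tilde\gamma(\tilde\theta_x(s))\big]\,ds
  + \int_{t_+}^{\tilde t_+}\!\!\tilde\gamma(\tilde\theta_x(s))\,ds.
\end{align*}
In the first integral, inserting $\pm\gamma(\tilde\theta_x(s))$ separates $\gamma(\theta_x)-\gamma(\tilde\theta_x)$, controlled by $\|\gamma\|_{C^{k}}$ times the flow difference for which (\ref{thetastable}) supplies uniform $C^{k}$-control by $\|\beta-\tilde\beta\|_{C^k}$, from $\gamma(\tilde\theta_x)-\tilde\gamma(\tilde\theta_x)$, which contributes $\|\gamma-\tilde\gamma\|_{C^{k-1}}$. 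Differentiating the second (variable-endpoint) integral yields boundary terms carrying $\partial_x^j t_+$ and $\partial_x^j\tilde t_+$, controlled by Lemma \ref{xtlemma}, times $C^{k-1}$-bounded values of $\tilde\gamma\circ\tilde\theta$. Assembling the three pieces and using $\|\mu_0(\tilde x_+)\|_{C^{k-1}}\le C\|\mu_0\|_{C^k}$ gives the stated inequality.

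I anticipate the main obstacle to be the bookkeeping in the middle term: differentiating the composition $\gamma(\theta_x(s))$ and the variable-endpoint integral up to order $k-1$ while keeping every factor bounded uniformly in $s\in[0,t_+(x)]$. This is precisely where a single derivative is lost—$\beta-\tilde\beta$ must be measured in $C^{k}$ to control $\mu-\tilde\mu$ only in $C^{k-1}$—since each spatial derivative of $\mu$ falls on the flow $\theta_x$ and on the exit data $x_+,t_+$, whose top-order stability in (\ref{thetastable}) and Lemma \ref{xtlemma} itself consumes one order of regularity of $\beta$.
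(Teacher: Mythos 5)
Your proposal is correct and takes essentially the same route as the paper's proof: both start from the characteristics formula (\ref{muchar}), split $\mu-\tilde\mu$ into a boundary/exit-point term and an exponential term, reduce the latter to stability of $I(x)=\int_0^{t_+(x)}\gamma(\theta_x(s))\,ds$, and then decompose $I-\tilde I$ into an integrand difference (handled by inserting $\pm\gamma(\tilde\theta_x(s))$ together with (\ref{thetastable})) plus the variable-endpoint integral $\int_{t_+}^{\tilde t_+}\tilde\gamma(\tilde\theta_x(s))\,ds$, whose derivatives are controlled via Lemma \ref{xtlemma}. The only deviations are cosmetic — your three-term telescoping versus the paper's two-term split of $\mu_0(x_+)-\tilde\mu_0(\tilde x_+)$, and a harmless sign slip in your formula for $I-\tilde I$ — so the argument matches the paper's.
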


\begin{proof}    
  By the method of characteristics, $\mu(x)$ is determined
  explicitly in (\ref{muchar}), while $\tilde\mu(x)$ has a 
  similar expression.
  \begin{align*}
      |\mu(x) - \tilde\mu(x)|  \leq 
      & \left|(\mu_0(x_+(x)) - \tilde\mu_0(\tilde{x}_+(x)))
             e^{-\int_0^{t_+(x)}\gamma(\theta_x(s))ds} 
             \right|\;+ \\
      & \left|\tilde\mu_0(\tilde{x}_+(x))
             \left( e^{-\int_0^{t_+(x)}\gamma(\theta_x(s))ds}
            - e^{-\int_0^{\tilde{t}_+(x)}\tilde\gamma
            (\tilde\theta_x(s))ds}
           \right)\right|
  \end{align*}
  Applying Lemma \ref{xtlemma}, we deduce that
  \begin{equation}
      |D_x^{k-1}[\mu_0(x_+(x)) - \tilde\mu_0(\tilde{x}_+(x))]|
      \leq \|\mu_0 - \tilde\mu_0\|_{C^{k-1}(\partial\Omega)}
      + C\|\mu_0\|_{C^{k-1}(\partial\Omega_+)}
        \|\beta - \tilde{\beta}\|_{C^{k-1}(\tilde{\Omega})}.
  \end{equation}
  This proves the $\mu_0(x_+(x))$ is stable. To consider the 
  second term, by the Leibniz 
  rule it is sufficient to prove the stability result for
  $\int_0^{t_+(x)}\gamma(\theta_x(s))ds$.
  
  Assume without loss of generality that $t_+(x)<\tilde{t}_+(x)$. 
  Then we have, applying (\ref{thetastable}),
  \begin{align*}
      \int_0^{t_+(x)} [ \gamma(\theta_x(s)) - 
      \tilde\gamma(\tilde\theta_x(s))]ds
      &= 
      \int_0^{t_+(x)} [
      ( \gamma(\theta_x(s)) - \gamma(\tilde\theta_x(s))) +
      ( \gamma - \tilde\gamma)(\tilde\theta_x(s))] ds\\
      &\leq
      C\|\gamma\|_{C^{0}(\tilde{\Omega})}
      \|\beta - \tilde{\beta}\|_{C^{0}(\tilde{\Omega})}
      + C\|\gamma - \tilde\gamma\|_{C^{0}(\tilde{\Omega})}.
  \end{align*}
  Derivatives of order $k-1$ of the above expression
  are uniformly bounded since 
  $t_+(x)\in C^{k-1}(\tilde\Omega)$, $\gamma$ has $C^k$
  derivatives bounded on $\tilde\Omega$ and $\theta_x(t)$ is
  stable as in (\ref{thetastable}).
  
  It remains to handle the term
  $$
      v(x) = \int_{t_+(x)}^{\tilde{t}_+(x)}
             \tilde\gamma(\tilde\theta_x(s))ds.
  $$
	$\tilde\beta$ and $\tilde\gamma$ are of class $C^k(\Omega)$, then so is the function $x\rightarrow\tilde\gamma(\tilde\theta(s))$. Derivatives of order $k-1$ of $v(x)$ involve terms of size $\tilde t_+(x) - t_+(x)$ and terms of form
	\begin{equation*}
		D_x^m\big(
		\tilde t_+ D_x^{k-1-m} \tilde\gamma(\tilde\theta_x(\tilde t_+)) 
		- t_+ D_x^{k-1-m} \tilde\gamma(\tilde\theta_x(t_+))
		\big),
		\quad 0\leq m\leq k-1.
	\end{equation*}
Since the function has $k-1$ derivatives that are Lipschitz continuous, we thus have
	\begin{equation*}
		|D_x^{k-1} v(x)| 
		\leq C \|\tilde t_+ - t_+\|_{C^{k-1}(\Omega)}.
	\end{equation*}
The rest of the proof follows Lemma \ref{xtlemma}.
\end{proof}
  
  With all prepared, we are ready to  prove the following stability result.
  
\begin{thm} \label{stabthm}
	Let $k\geq3$. Assume that $(\mu, q)$ and $(\tilde \mu, \tilde q)$ are in $\mathcal{P}$ and that $|g_j-\check u_j|_{\partial\Omega}$, $j=1,2$, are sufficiently small. Let $\mu,\tilde\mu$ be solutions of (\ref{normvecode}) with $(\beta,\gamma), (\tilde\beta,\tilde\gamma)$ and  $h$ sufficiently small. Assume on the boundary,$\mu_0 = \tilde\mu_0$.  Then we have
that
\begin{equation}
	\|\mu - \tilde\mu\|_{C^{k-1}(\tilde\Omega)}
	+ \|q - \tilde q\|_{C^{k-3}(\tilde\Omega)}
	\leq C \|d - \tilde d\|_{(C^{k}(\tilde\Omega))^2}.
\end{equation}
\end{thm}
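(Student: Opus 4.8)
The plan is to chain the already-established stability of $\mu$ (Proposition~\ref{stableprop}) with two further steps: controlling the coefficients $(\beta,\gamma)$ by the data $d$, and recovering $q$ from $\mu$ and $d$. I would therefore split the target estimate into a $\mu$-part and a $q$-part, working throughout on $\tilde\Omega$, where $z_1,z_2$ vanish and the flow of $\beta$ reaches $\partial\Omega_+$ transversally.

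For the $\mu$-part, I first observe that $\beta=\tfrac h2\chi(d_1\nabla d_2-d_2\nabla d_1)$ and $\gamma=-\tfrac h4\chi(d_1\Delta d_2-d_2\Delta d_1)$ are bilinear differential expressions in the components of $d=(d_1,d_2)$; see (\ref{betagamma}), (\ref{normbeta}), (\ref{normvecode}). Since $(\mu,q),(\tilde\mu,\tilde q)\in\mathcal P$ are uniformly bounded and $\partial\Omega$ is smooth, elliptic regularity bounds $d,\tilde d$ and their derivatives uniformly, so these act as bounded ``coefficients'' when I expand $\beta-\tilde\beta$ and $\gamma-\tilde\gamma$ as bilinear differences and apply the product rule, giving $\|\beta-\tilde\beta\|_{C^{k}(\tilde\Omega)}+\|\gamma-\tilde\gamma\|_{C^{k-1}(\tilde\Omega)}\le C\|d-\tilde d\|_{(C^{k}(\tilde\Omega))^2}$ (here the derivative hidden in $\nabla d$ and $\Delta d$ must be tracked carefully). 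Feeding this into Proposition~\ref{stableprop}, and using the hypothesis $\mu_0=\tilde\mu_0$ on $\partial\Omega_+$ to annihilate its last term while $\|\mu_0\|_{C^k(\partial\Omega_+)}$ stays bounded over $\mathcal P$, gives $\|\mu-\tilde\mu\|_{C^{k-1}(\tilde\Omega)}\le C\|d-\tilde d\|_{(C^{k}(\tilde\Omega))^2}$.

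For the $q$-part, I would recover $u_j=d_j/\mu$ and then use the Schr\"odinger equation (\ref{scheqn2}) in the form $q=-\Delta u_j/u_j$. Because $\mu$ is bounded below by a positive constant, the quotient rule yields $\|u_j-\tilde u_j\|_{C^{k-1}(\tilde\Omega)}\le C(\|d-\tilde d\|_{C^{k-1}}+\|\mu-\tilde\mu\|_{C^{k-1}})\le C\|d-\tilde d\|_{(C^{k})^2}$. Writing $q-\tilde q=-\big(\tilde u_j\,\Delta(u_j-\tilde u_j)+(\tilde u_j-u_j)\,\Delta\tilde u_j\big)/(u_j\tilde u_j)$, the Laplacian costs two derivatives, which is exactly why the $q$-estimate drops to $C^{k-3}$ and why $k\ge 3$ is required. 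The point making this quotient estimate stable is that $u_j$ stays bounded away from zero on $\tilde\Omega$: the CGO amplitude $a_j+r_j$ is nonvanishing with $r_j=\mathcal O(h)$, the weight $\varphi=\log|x-x_0|$ is bounded above and below on $\bar\Omega$ since $x_0\notin\overline{\ch(\Omega)}$, and $u_j$ is $C^0$-close to $\check u_j$ on $\tilde\Omega$; the unique-continuation argument of Theorem~\ref{uniquenessthm} further forbids $u_j$ from vanishing on an open set. Hence $1/(u_j\tilde u_j)$ is $C^{k-1}$-bounded and $\|q-\tilde q\|_{C^{k-3}(\tilde\Omega)}\le C\|u_j-\tilde u_j\|_{C^{k-1}(\tilde\Omega)}\le C\|d-\tilde d\|_{(C^{k})^2}$. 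Adding the two estimates gives the theorem, with $C$ depending on $h$, $R$, and $P$.

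I expect the main obstacle to be the uniform positive lower bound on $|u_j|$ over all of $\tilde\Omega$: since $u_1$ grows like $e^{\varphi/h}$ while $u_2$ decays like $e^{-\varphi/h}$, one must verify that the $h$-dependent (but fixed) lower bounds persist on the whole of $\tilde\Omega$, not merely near $\partial\Omega_+$ — which is precisely the role of the construction of $\tilde\Omega$ in (\ref{tildeomega}) and the transversality bound $-(x-x_0)\cdot n(y)\ge\eta$. The secondary difficulty is the honest derivative accounting through the bilinear forms $\beta,\gamma$ and through $q=-\Delta u_j/u_j$, which is what dictates the $C^{k-1}/C^{k-3}$ split on the left-hand side.
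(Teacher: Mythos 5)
Your proposal is correct and takes essentially the same route as the paper's (very terse) proof: stability of $\mu$ via (\ref{betagamma}) and Proposition \ref{stableprop}, then stability of $u_j = d_j/\mu$, non-vanishing of $u_j$ on $\tilde\Omega$ by closeness to the CGO solutions $\check u_j$ (via (\ref{gconstrain}) and (\ref{uconstrain})), and finally reading $q$ off the equation $\Delta u_j + qu_j=0$, whose two-derivative loss is exactly the source of the $C^{k-3}$ norm and the hypothesis $k\geq 3$. The extra material you supply --- the bilinear derivative bookkeeping for $\beta-\tilde\beta$, $\gamma-\tilde\gamma$ (where your flagged caveat is real, since $\nabla d$ and $\Delta d$ cost derivatives that the paper's ``follows directly'' also glosses over) and the quantitative lower bound on $|u_j|$ over all of $\tilde\Omega$ --- makes explicit what the paper leaves implicit, with the minor remark that unique continuation is not needed once one has $C^0$-closeness of $u_j$ to the non-vanishing $\check u_j$.
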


\begin{proof}
	The first part follows directly from (\ref{betagamma}) and Proposition \ref{stableprop}. This provides a stability result for $\nu = 1/\mu$ and thus for $u_j = \nu d_j$. Notice $\check u_j$ is non-vanishing on $\tilde\Omega$. So when choosing $|g_j-\check u_j|_{\partial\Omega}$ sufficiently small, the arguments in (\ref{gconstrain}) and (\ref{uconstrain}) show that $u_j$ is non-vanishing in $\tilde\Omega$. Thus $\Delta u_j + u_j q = 0$ gives the stability control of $q$.
\end{proof}
  
\subsection{Stability Result for $2n$ complex observations}
  We now consider the case that $4n$ real-valued observations 
  are taken, with which we can construct $2n$ sets of 
  complex-valued boundary and internal data, denoted 
  as $g_{1,2}^j=\{g_1^j, g_2^j\}$ and $d_{1,2}^j=\{d_1^j, d_2^j\}$.
  For the rest of this section, we choose  $j=1,\ldots,n$.
  
  Let $H$ be a hyperplane in 
  $\RR^n\backslash\overline{\ch(\Omega)}$.
  Choose $x^j\in H$, such that $\{x^j-x^1\}$ form a basis
  of $H$, thus $\mathrm{span}\{x^j-x^1\}$ has dimension 
  $n-1$. Then for $\forall x\in\Omega$, $\{x^j-x\}$
  form a basis of $\RR^n$. In fact, since $x\notin H$, 
  $$
      \mathrm{span}\{x^j-x\} =
      \mathrm{span}\{x^j-x^1, x^1-x\}
  $$
  has dimension $m$.

  Define 
  $$
      \varphi^j= \log|x-x^j|,\qquad
      \psi = \mathrm{dist}\left(\frac{x-x^j}{|x-x^j|},\omega\right),
      \qquad j=1,\ldots,n.
  $$
  Then the matrix
  $
      {B}_\varphi :=(\nabla\varphi^j)
  $
  is invertible.

  Corresponding to $x^j$, $\varphi^j$ and $\psi^j$,
  we can define the front and back sides of the boundary,
  $\partial\Omega_+^j$ and $\partial\Omega_-^j$, by
  (\ref{frontbackbd}), and define the CGO solutions,
   $\check{u}^j_{1,2}$,  
  by (\ref{CGOu12}).
  
  Let us now choose boundary conditions $g^j_{1,2}$ close to
  $\check{u}_{1,2}^j$ on $\partial\Omega$, precisely,
  by (\ref{gconstrain}). With internal data $d_{1,2}^j$, we can 
  define $\beta^j$ by (\ref{betagamma}) and (\ref{normbeta}).
  
  Proposition \ref{uniquemu} shows that the matrix $B = (\beta^j)$
  is close to an invertible matrix $B_\varphi$ on a subregion
  $\tilde{\Omega}$. Therefore, we can make $B$ invertible on
  $\tilde{\Omega}$ 
  with inverse of class 
  $C^k(\tilde{\Omega})$ by choosing $h$  
  sufficiently small. Consequently, (\ref{normvecode}) can be 
  rewrite
  \begin{equation}
  	\label{4nstab1}
      \nabla\mu + \Lambda\mu = 0,
  \end{equation}
  where $\Lambda$ is a vector-valued function in 
  $(C^k(\tilde\Omega))^n$.
  Finally, the construction of $\Lambda$ is stable under small
  perturbations in the data $d^j$. Indeed, let $\Gamma$ and 
  $\tilde\Gamma$ be two vector fields constructed from the
  internal measurements $d_{1,2}^j$ and $\tilde d_{1,2}^j$, 
  respectively. Then when $h$ is sufficiently small,
  \begin{equation}
  	\label{4nGammastab}
  	\|\Gamma - \tilde\Gamma\|_{(C^k(\tilde\Omega))^n}
  	\leq \|d - \tilde d\|_{(C^{k+1}(\tilde\Omega))^n}.
  \end{equation}

   Now we consider equation (\ref{4nstab1}) with boundary condition
   $\mu = \mu_0$. Assume $\Omega$ is bounded and connected and 
   $\partial\Omega$ is smooth. Let $x\in\tilde\Omega$. Find a 
   smooth curve from $x$ to a point on the boundary.
   Restricted to this curve, (\ref{4nstab1})
   is a stable ordinary differential equation. Keep the curve fixed.
   Let $\mu, \tilde\mu$ be solutions to (\ref{4nstab1}) with
   respect to $\Gamma, \tilde\Gamma$, respectively. Assume
   $\mu_0 = \tilde\mu_0$ on $\partial\Omega$. By solving the equation
   explicitly and applying (\ref{4nGammastab}), we find that
   \begin{equation}
   		\label{4nmustab}
   		\|\mu - \tilde\mu\|_{C^{k}(\tilde\Omega)}
   		\leq \|d - \tilde d\|_{C^{k+1}(\tilde\Omega)}.
   \end{equation}
   
   We can now state the main theorem of this section.

\begin{thm}
	Let $k\geq 2$. Assume that $(\mu, q)$ and $(\tilde \mu, \tilde q)$ are in $\mathcal{P}$ and that we have $2n$ well-chosen complex valued measurement, such that	$|g_{1,2}^j-\check u_{1,2}^j|_{\partial\Omega}$, $j=1,\ldots,n$, are sufficiently small. Let $\mu,\tilde\mu$ be solutions of (\ref{4nstab1}) with $\Gamma, \tilde\Gamma$ and  $h$ sufficiently small. Assume on the boundary, $\mu_0 = \tilde\mu_0$.  Then we have
that
	\begin{equation}
		\|\mu - \tilde\mu\|_{C^{k}(\tilde\Omega)}
		+ \|q - \tilde q\|_{C^{k-2}(\tilde\Omega)}
		\leq C \|d - \tilde d\|_{(C^{k+1}(\tilde\Omega))^2}.
	\end{equation}
\end{thm}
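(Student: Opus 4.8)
The plan is to reproduce the two-stage argument of Theorem \ref{stabthm}---first stabilize $\mu$, then bootstrap to $q$ through the \Schrodinger equation---but now feeding in the improved reconstruction of $\mu$ afforded by the gradient equation (\ref{4nstab1}). Because the $2n$-observation scheme recovers $\mu$ by solving a first-order system along an arbitrary connecting curve rather than by integrating the single characteristic flow of Lemma \ref{xtlemma}, no derivative is lost in passing from the vector field to $\mu$; this is exactly what yields the sharper exponents ($C^k$ for $\mu$, $C^{k-2}$ for $q$) and removes the need for the convexity Hypothesis \ref{ConvHypo}.

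First I would take the $\mu$-stability as already supplied by (\ref{4nmustab}), which follows from the invertibility of $B=(\beta^j)$ on $\tilde\Omega$ together with the stability (\ref{4nGammastab}) of the reconstructed coefficient $\Lambda$. Since membership in $\mathcal{P}$ forces $\mu$ to be bounded below by a positive constant, the reciprocal $\nu=1/\mu$ inherits $C^k(\tilde\Omega)$ stability, and hence $u_j=\nu d_j=d_j/\mu$ is stable in $C^k(\tilde\Omega)$, the regularity being limited by that of $\mu$ rather than by the data $d_j\in C^{k+1}$.

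Next I would establish that $u_j$ is non-vanishing on $\tilde\Omega$, which is what permits the recovery $q=-\Delta u_j/u_j$. The CGO solution $\check u_j=e^{\frac{1}{h}(\pm\varphi+i\psi)}(a+r)$ is non-vanishing there: the amplitude $a$ never vanishes by (\ref{a:constrain}), the remainder obeys $\|r\|=\mathcal{O}(h)$, and the boundary-layer term $z_j$ is, by the very definition (\ref{tildeomega}) of $\tilde\Omega$, absent on $\tilde\Omega$. Choosing $|g_j-\check u_j|_{\partial\Omega}$ small and invoking the elliptic estimate (\ref{uconstrain}) keeps $u_j$ bounded away from zero on $\tilde\Omega$. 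Then $q=-\Delta u_j/u_j$ on $\tilde\Omega$ by (\ref{scheqn2}); since $u_j$ is $C^k$-stable and bounded below while $\Delta u_j$ is $C^{k-2}$-stable, the quotient is controlled in $C^{k-2}(\tilde\Omega)$, giving $\|q-\tilde q\|_{C^{k-2}(\tilde\Omega)}\leq C\|d-\tilde d\|_{(C^{k+1}(\tilde\Omega))^2}$. Adding this to the $\mu$-estimate (\ref{4nmustab}) produces the claimed bound.

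I expect the only genuinely delicate point to be the uniform-in-$x$ lower bound $|u_j|\geq c>0$ on all of $\tilde\Omega$: one must verify that the lower bound on $|\check u_j|$ coming from the nonvanishing amplitude $a$ survives both the $\mathcal{O}(h)$ perturbation by $r$ and the $\epsilon$-perturbation incurred when $\check u_j$ is replaced by the admissible illumination-generated solution $u_j$, and that these perturbations are controlled uniformly over $\tilde\Omega$ rather than merely pointwise. This is precisely where excising the supports of the $z_j$ in (\ref{tildeomega}) is essential, since on $\tilde\Omega$ the CGO solution retains its clean product form $e^{(\cdot)/h}(a+r)$ whose modulus is bounded below independently of $h$ for $h$ small.
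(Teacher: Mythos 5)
Your proposal is correct and follows essentially the same route as the paper: the $\mu$-estimate is taken directly from (\ref{4nmustab}), and the $q$-estimate is obtained exactly as in Theorem \ref{stabthm}, via stability of $\nu=1/\mu$ and $u_j=\nu d_j$, non-vanishing of $u_j$ on $\tilde\Omega$ from closeness to the CGO solutions, and then reading off $q=-\Delta u_j/u_j$ from the \Schrodinger equation. Your closing remark on the uniform lower bound for $|u_j|$ is a point the paper's own (very terse) proof leaves implicit, so flagging it is a welcome refinement rather than a deviation.
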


\begin{proof}
	The first result is directly from (\ref{4nmustab}). The proof of the stability of $q$ is exactly that same as in the proof of Theorem \ref{stabthm}.
\end{proof}

\section{Proof of Main results}
	In this section, we return to the inverse diffusion problem with internal data (\ref{diff_eq}) and prove the main theorems. We will invert the standard Liouville change of variables mentioned at the beginning of section \ref{ISID}. Recall that by defining $v = \sqrt{D}u$, we had (\ref{Liou1}, \ref{Liou2}, \ref{Liou3}), which induce
	\begin{equation}
		\label{Liou4}
		-\Delta\sqrt{D} - q\sqrt{D} = \mu.
	\end{equation}
	Section \ref{ISID} allows us to reconstruct $\mu$ and $q$, while $D$ is given on $\partial\Omega$. Thus we can solve for $\sqrt{D}$ from (\ref{Liou4}) and then $\sigma_a = \mu\sqrt{D}$.
	
	The uniqueness of thus a solution $\sqrt{D}$ of (\ref{Liou4}) is based on that $0$ is not an eigenvalue of $\Delta + q$. It is enough to prove that $(D,\sigma_a)\in\mathcal{M}$ implies $(q,\mu)\in\mathcal{P}$. Indeed, the inverse of $\Delta + q$ is compact and $\sqrt{D}\in Y = H^{\frac{n}{2}+k+2+\epsilon}(\Omega) \subset C^{k+2}(\bar{\Omega})$. These imply $\Delta + q$ is surjective and thus does not have 0 as an eigenvalue by the Fredholm alternative.
	
	So far we proved the unique reconstruction of $D,\sigma_a$ form internal data for well-chosen partial boundary illuminations as stated in Theorem \ref{main1}.
	
\begin{proof}[Proof of Theorem \ref{main2}]
	Since $\sigma = \mu\sqrt{D}$, we only need to prove the stability of $\sqrt{D}$. When $k\geq3$, we have the stability of the reconstructions of $q\in C^k$ and $\mu\in C^{k+1}$ in 
Theorem \ref{stabthm},
while the boundary illuminations are chosen from a open set in $C^{k,\alpha}$ with $\alpha > 1/2$. According to (\ref{Liou4}), we have
\begin{equation}
	-(\Delta + q)(\sqrt{D} - \sqrt{\tilde D}) = \mu - \tilde\mu
	+ (q - \tilde q)\sqrt{\tilde D}.
\end{equation}
By elliptic regularity, we deduce that $(\sqrt{D} - \sqrt{\tilde D})$ is bounded in $C^k(\bar\Omega)$, and hence the theorem.
\end{proof}

The proof of Theorem \ref{main3} will be the same as above.

\end{document}